\newtheorem{Theorem}{Theorem}
\newtheorem{Lemma}{Lemma}
\newtheorem{Problem}{Problem}
\newtheorem{Remark}{Remark}
\newtheorem{Assumption}{Assumption}
\newtheorem{Definition}{Definition}
\DeclareMathOperator{\sign}{sign}
\def\IR{{\mathbb R}}
\begin{document}

\begin{frontmatter}

\title{Fixed-time Control under Spatiotemporal and Input Constraints: A Quadratic Program Based Approach\thanksref{footnoteinfo}} 

\thanks[footnoteinfo]{The authors would like to acknowledge the support of the Air Force Office of Scientific Research under award number FA9550-17-1-0284 and of the National Science Foundation under the award number 1942907. This paper was not presented at any IFAC meeting. \\
Ehsan Arabi was a Postdoctoral Research Fellow at the Department of Aerospace Engineering, University of Michigan, when this work was conducted. He is currently a Research Engineer at Ford Research and Advanced Engineering, Ford Motor Company, Dearborn, MI 48121, USA.}

\author[P1]{Kunal Garg}\ead{kgarg@umich.edu},    
\author[P2]{Ehsan Arabi}\ead{earabi@ford.com},    
\author[P1]{Dimitra Panagou}\ead{dpanagou@umich.edu}

\address[P1]{Department of Aerospace Engineering, University of Michigan, Ann Arbor, MI, 48109, USA}

\address[P2]{Research and Advanced Engineering, Ford Motor Company, Dearborn, MI 48121, USA}



\begin{keyword}
Fixed-time stability; Constrained control; Nonlinear systems; QP based control.  
\end{keyword}

\begin{abstract}
In this paper, we present a control synthesis framework for a general class of nonlinear, control-affine systems under spatiotemporal and input constraints. First, we study the problem of fixed-time convergence in the presence of input constraints. The relation between the domain of attraction for fixed-time stability with respect to input constraints and the required time of convergence is established. {It is shown that increasing the control authority or the required time of convergence can expand the domain of attraction for fixed-time stability.} Then, we consider the problem of finding a control input that confines the closed-loop system trajectories in a safe set and steers them to a goal set within a fixed time. To this end, we present a Quadratic Program (QP) formulation to compute the corresponding control input. We use slack variables to guarantee feasibility of the proposed QP under input constraints.
Furthermore, when strict complementary slackness holds, we show that the solution of the QP is a continuous function of the system states, and establish uniqueness of closed-loop solutions to guarantee forward invariance using Nagumo's theorem. We present two case studies, an example of adaptive cruise control problem and an instance of a two-robot motion planning problem, to corroborate our proposed methods.
\end{abstract}


\end{frontmatter}

\section{Introduction}
Driving the state of a dynamical system to a given desired point or a desired set in the presence of constraints is a problem of major practical importance. 
Constraints requiring the system trajectories to evolve in some \textit{safe} set at all times while visiting some goal set(s) are common in safety-critical applications. Constraints pertaining to convergence to a goal set within a fixed time often appear in time-critical applications, e.g., when a task must be completed within a given time interval. \textit{Spatiotemporal} specifications impose spatial (state) as well as temporal (time) constraints on the system trajectories. Figure \ref{fig:motivating example} shows a scenario of a motivating problem requiring a quadrotor to remain in a domain (acting as a safe set), which consists of the region bounded by the green boundary excluding the regions marked in red. Furthermore, the blue regions denote the goal sets, which the quadrotor is required to visit in a given time sequence. 

\begin{figure}[b]
    \centering
    \includegraphics[width=1\columnwidth,clip]{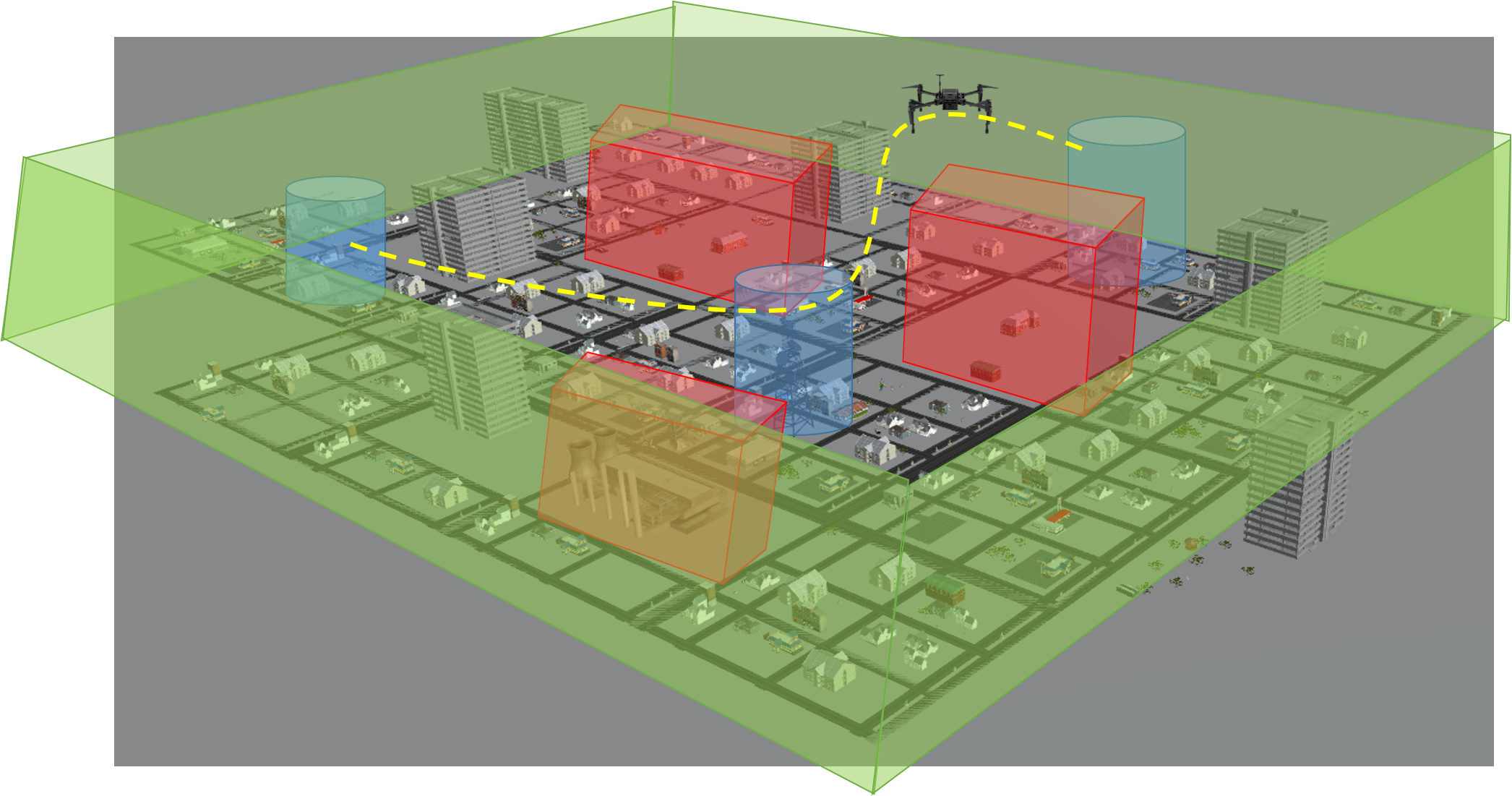}
    \caption{Motivating example scenario for a motion planning problem governed by spatiotemporal constraints.}\label{fig:motivating example}
\end{figure}

Safety in dynamical systems is typically realized as establishing that a desired set of safe states, or safe set, is \textit{forward invariant} under the system dynamics. The control objective reduces to designing a control law such that the closed-loop system trajectories remain always in the safe set. The approach in \cite{tee2009barrier} utilizes Lyapunov-like barrier functions to guarantee that the system output always remains inside a given set. More recently, in \cite{ames2017control}, conditions using Zeroing Control Barrier Functions (ZCBF) are presented to ensure forward invariance of a desired set. Various approaches have been developed to achieve convergence of system trajectories to desired sets or points while satisfying control input constraints. Methods such as Model Predictive Control (MPC) \cite{saska2014motion,grancharova2015uavs} as well as Control Lyapunov Functions (CLF) \cite{srinivasan2018control,li2018formally} have been studied extensively in the literature. Quadratic Program (QP)-based approaches have gained popularity for control synthesis, see for instance \cite{li2018formally,srinivasan2018control,ames2017control,rauscher2016constrained, cortez2019control}. These methods are suitable for real-time implementation as QPs can be solved efficiently \cite{cortez2019control,glotfelter2017nonsmooth,glotfelter2018boolean}. The authors in \cite{nguyen2016exponential} use an exponential barrier function in the QP formulation to guarantee safety of the closed-loop trajectories. In \cite{wang2017safety}, safety barrier certificates are presented to ensure scalable collision-free behavior in multi-robot systems. 

Concurrent forward invariance of a safe set and convergence to a goal set can be achieved via a combination of CLFs and Control Barrier Functions (CBFs), see e.g., \cite{ames2017control,romdlony2016stabilization}. However, the concurrent satisfaction of the corresponding conditions and the underlying control synthesis problem become more challenging in the presence of input constraints, such as actuator saturation, since the latter may affect the region of safety and fixed-time convergence of the system trajectories. Most of the aforementioned contributions address control design that achieves safety along with convergence to a desired goal set (or point), but without explicitly considering control input constraints. Such constraints are considered in \cite{ames2017control}, where performance and safety objectives are represented using CLFs and CBFs, respectively, along with control input constraints in a QP. 

Furthermore, most of the aforementioned work, with exception of \cite{srinivasan2018control,li2018formally}, deals with asymptotic or exponential convergence of the system trajectories to the desired goal point or goal sets. In contrast, Finite-Time Stability (FTS) is a concept that guarantees convergence in finite time. The authors in \cite{bhat2000finite} introduce necessary and sufficient conditions in terms of a Lyapunov function for the equilibrium of a continuous-time, autonomous system to exhibit FTS. Fixed-Time Stability (FxTS) \cite{polyakov2012nonlinear} is a stronger notion than FTS, where the time of convergence does not depend upon the initial conditions. For specifications involving temporal constraints and time-critical systems, the theory of finite- or fixed-time stability can be leveraged in the control design to guarantee that such specifications are met.
It has also been shown that a faster rate of convergence generally implies that the closed-loop system has better disturbance rejection properties \cite{bhat2000finite}, which further motivates the study of finite- and fixed-time stable systems. 
The authors of \cite{srinivasan2018control} formulate a QP for finite-time convergence to a desired set, however without considering input constraints. This limitation is removed in \cite{li2018formally}, where the authors consider a QP formulation incorporating input, safety and convergence constraints. The authors in \cite{lindemann2019control} use CBFs in a QP formulation to encode Signal-Temporal Logic (STL) specifications that impose reaching to a goal set within a finite time.
 

In this paper, we study the problem of reaching a given goal set $S_G$ {within a fixed time $T_{ud}$}, while remaining in a given safe set $S_S$ at all times, for a general class of nonlinear control-affine systems with input constraints. In the preliminary conference version \cite{garg2019control}, a QP formulation is proposed to compute the control input for fixed-time convergence under input and safety constraints, yet without any guarantees on the feasibility of the proposed method. Per its definition, {FxTS} of an equilibrium point from arbitrary initial conditions presumes unbounded control authority. To address the problem of {FxTS} in the presence of input constraints, new Lyapunov conditions from \cite{garg2021FxTSDomain} are utilized. When used in a QP, the new Lyapunov conditions introduce a slack term, which results in feasibility guarantees in the presence of input constraints. The contributions of the paper as follows:
\begin{itemize}
    \item First, FxTS conditions are utilized in a QP, and using Karush-Kuhn-Tucker (KKT) conditions, the closed-form expression for the optimal value of the slack term is computed for the case when the control input is saturated. Then, the relation between the Domain of Attraction (DoA) for FxTS, the input bounds, and the fixed time of convergence is established for a 1-D control-affine system. 
    \item Then, a novel QP formulation that utilizes Fixed-Time (FxT) CLFs and CBFs is proposed to synthesize controllers for nonlinear, control-affine systems, so that forward invariance of a safe set and FxT convergence of the system trajectories to a goal set is guaranteed. We use slack terms both in the safety and the FxT convergence constraints to ensure that the QP is always feasible even in the presence of control input constraints.
    \item Conditions for continuity of the control input as the optimal solution of the QP are studied under milder conditions as compared to prior work, and it is shown that the closed-loop solutions are uniquely determined so that forward invariance of the safe set can be established. 
\end{itemize}
Compared to the earlier literature, the contributions of this paper are summarized as follows. The QP-based approaches in the prior literature, e.g., \cite{li2018formally,srinivasan2018control,nguyen2016exponential,wang2016safety,wang2017safety,ames2014rapidly,ames2017control,lindemann2019control,garg2019control}, do not provide feasibility guarantees for the underlying QP in the presence of input constraints. However, without feasibility of the QP, it is not guaranteed that a control input can be always synthesized, and without consideration of input bounds, the resulting input might not be realizable on a real-world platform. In comparison to these prior studies, we consider control input constraints in addition to the safety and convergence requirements, and guarantee the feasibility of the proposed QP.
The proposed approach further advances the results in \cite{srinivasan2018control,li2019finite,li2018formally} in terms of the achieved time of convergence.
Furthermore, we generalize the results of \cite{ames2017control,morris2015continuity,glotfelter2017nonsmooth}, where the regularity properties of the solution of the QP is discussed in the absence of input constraints; we show continuity of the solution of the proposed QP under the presence of input constraints, and under milder regularity assumptions on the CLF, CBF, and the system dynamics, as compared to the aforementioned work. 

\noindent\textbf{Organization}: The rest of the paper is organized as follows. The main problem formulation is discussed in Section \ref{sec: math prelim}. Mathematical preliminaries of safety and fixed-time stability are reviewed in Section \ref{sec: safe FxT conv}. New results on fixed-time stability under input constraints are presented in Section \ref{sec: rel inp bound van}. The main results on QP-based control synthesis are presented in Section \ref{sec: main results}. Section \ref{sec: simulation} presents two numerical case studies and Section \ref{sec: conclusion} concludes the paper with some directions for future work.

\section{Problem Formulation}\label{sec: math prelim}
In the rest of the paper, $\IR$ denotes the set of real numbers and $\mathbb R_+$ denotes the set of non-negative real numbers. We use
$\|\cdot\|_p$ to denote the $p-$norm, and 
$\|\cdot\|$ to denote the Euclidean norm. We write $\partial S$ for the boundary of a closed set $S\in \mathbb R^n$, $\textnormal{int}(S) \coloneqq S\setminus \partial S$ for its interior, and $|x|_S = \inf_{y\in S}\|x-y\|$ for the distance of a point $x\in \mathbb R^n\setminus S$ from the set $S$. We use $\mathcal C^k$ to denote the set of $k$ times continuously differentiable functions. The Lie derivative of a $\mathcal C^1$ function $V:\mathbb R^n\rightarrow \mathbb R$ along a vector field $f:\mathbb R^n\rightarrow\mathbb R^n$ at a point $x\in \mathbb R^n$ is denoted as $L_fV(x) \coloneqq \frac{\partial V}{\partial x} f(x)$. For two vectors $x, y\in \mathbb R^n$, we use $x\leq y$ to represent element-wise inequalities $x_i\leq y_i, i = 1, 2, \ldots, n$. A continuous function $\alpha:\mathbb R_+\rightarrow\mathbb R_+$ is a class-$\mathcal K$ function if it is strictly increasing and $\alpha(0) = 0$. It belongs to $\mathcal K_\infty$ if in addition, $\lim_{r\to \infty}\alpha(r) = \infty$. A function $\kappa:\mathbb R_+\times\mathbb R_+\rightarrow\mathbb R_+$ is a class-$\mathcal{KL}$ function if 1) for all $t\geq 0$, the map $\kappa(r,t)$ belongs to class-$\mathcal K$ and 2) for all $r\geq 0$, the map $\kappa(r,t)$ is decreasing in $t$ with $\kappa(r,t)\to 0$ as $t\to \infty$. A function $V:\mathbb R^n\rightarrow\mathbb R$ is said to be positive definite with respect to a compact set $S\subset \mathbb R^n$ if $V(x)>0$ for $x\notin S$ and $V(x) = 0$ for $x\in \partial S$. We drop the arguments $t, x$ whenever clear from the context.

Next, we present the problem formulation. Consider the nonlinear, control-affine system
\begin{align}\label{cont aff sys}
    \dot x(t) = f(x(t)) + g(x(t))u(x(t)), \quad x(0) = x_0, 
\end{align}
where $x\in \mathbb R^n$ is the state vector, $f:\mathbb R^n\rightarrow \mathbb R^n$ and $g:\mathbb R^n\rightarrow \mathbb R^{n\times m}$ are system vector fields, continuous in their arguments, and $u\in \mathcal U\subset \mathbb R^m$ is the control input vector where $\mathcal U$ is the input constraint set. In addition, consider a safe set $S_S \coloneqq \{x\; |\; h_S(x)\leq 0\}$ to be rendered forward invariant under the closed-loop dynamics of \eqref{cont aff sys}, and a goal set $S_G \coloneqq  \{x\; |\; h_G(x)\leq 0\}$ to be reached by the closed-loop trajectories of \eqref{cont aff sys} in a user-defined fixed time $T_{ud}>0$, where $h_S, h_G:\mathbb R^n\rightarrow\mathbb R$ satisfy the following assumption. 

\begin{Assumption}\label{assum: sets Ss Sg}
The functions $h_S(x),h_G(x)\in \mathcal C^1$, $S_G\bigcap S_S\neq \emptyset$, {the set $S_G$ is compact}, and the sets $S_S$ and $S_G$ have non-empty interiors. Furthermore, the function $h_G$ is proper with respect to set $S_G$, i.e., there exists a class-$\mathcal K_\infty$ function $\alpha_G$ such that $h_G(x)\geq \alpha_G(|x|_{S_G})$, for all $x\notin S_G$.
\end{Assumption} 

\noindent Note that the boundary and the interior of set $S_S$ (and similarly, of the set $S_G$) are given as $\partial S_S \coloneqq \{x\; |\; h_S(x) = 0\}$ and $\textnormal{int}(S_S) \coloneqq \{x\; |\; h_S(x)<0\}$, respectively. Next, we define the notion of fixed-time domain of attraction for a compact set $S\subset \mathbb R^n$: 

\begin{Definition}[\textbf{FxT-DoA}]\label{def:FxT-DoA} 
For a compact set $S\subset\mathbb R^n$, the set $D\subset \mathbb R^{n}$, satisfying $S\subset D$, is a Fixed-Time Domain of Attraction (FxT-DoA) with time $T>0$ for the closed-loop system \eqref{cont aff sys} under $u$, if 
\begin{itemize}
    \item[i)] for all $x(0) \in D$, $x(t) \in D$ for all $t\in [0, T]$, and
    \item[ii)] there exists $T_0\in [0, T]$ such that $\lim_{t\to T_{0}}x(t) \in S$.
\end{itemize}
\end{Definition} 


In words, a FxT-DoA for a set $S\subset \mathbb R^n$ is a set $D\supset S$ such that it is forward-invariant and starting from any point within the set $D$, the system trajectories reach the set $S$ within a fixed-time $T$. We can now state the main problem considered in this paper.

\begin{Problem}\label{P reach S}
Design a control input $u(x) \in \mathcal U \coloneqq \{v\in \mathbb R^m\; |\; A_uv\leq b_u\}$ and compute $D\subset\mathbb R^n$, so that for all $x_0\in D\subseteq S_S$, the closed-loop trajectories $x(t)$ of \eqref{cont aff sys} satisfy $x(t) \in S_S$ for all $t\geq 0$, and $x(T_{ud})\in S_G$, where $T_{ud}>0$ is a user-defined fixed time and $D$ is a FxT-DoA for the set $S_G$.
\end{Problem}

\noindent Input constraints of the form $\mathcal U = \{v\in \mathbb R^m\; |\; A_uv\leq b_u\}$ are very commonly considered in the literature, see e.g., \cite{cortez2019control,ames2017control}. 
In Section \ref{sec: main results}, we present a quadratic program that computes a control input $u$ that solves Problem \ref{P reach S}.
Before that, we present mathematical preliminaries in the next section.

\section{Preliminaries}\label{sec: safe FxT conv}
\subsection{Forward invariance of safe set}
Problem \ref{P reach S} requires that the closed-loop system trajectories of \eqref{cont aff sys} stay in the set $S_S$ at all times, i.e., the set $S_S$ is \textit{forward-invariant}. 
Forward invariance of a set is defined as follows:

\begin{Definition}
A set $S\subset\mathbb R^n$ is forward invariant for the closed-loop system \eqref{cont aff sys} under the effect of a control input $u(x(t))$ if $x_0\in S$ implies that $x(t)\in S$ for all $t\geq 0$.
\end{Definition}

The following result, known as Nagumo's theorem, is adapted from \cite{blanchini1999set} for forward invariance of the set $S_S$ for the control system \eqref{cont aff sys}:

\begin{Lemma}[\textbf{Nagumo's theorem}]\label{Th: Nagumo for inv}
Let $u(x)\in \mathcal U$ be a continuous control input such that the resulting closed-loop trajectories of \eqref{cont aff sys} are uniquely determined in forward time. Then, the set $S_S$ is forward invariant for the closed-loop system \eqref{cont aff sys} if and only if the following holds:
\begin{align}
    L_fh_S(x)+L_gh_S(x)u(x)\leq 0\quad \forall x\in \partial S_S.
\end{align}
\end{Lemma}
The interested reader is referred to \cite[Section 3.1]{blanchini1999set} for a detailed discussion on forward invariance of sets. We make the following assumption to guarantee that the safe set $S_S$ can be rendered forward invariant for \eqref{cont aff sys}.
\begin{Assumption}\label{Assum feas}
For all $x\in \partial S_S$, there exists a control input $u(x)\in \mathcal U$ such that the following condition holds:
\begin{align}\label{eq: safety cond}
    L_fh_S(x)+L_gh_S(x)u(x)\leq 0.
\end{align}
\end{Assumption}

Similar assumptions have been used in literature, either explicitly (e.g., \cite{romdlony2016stabilization}) or implicitly (e.g., \cite{ames2017control}). In this work, we use the following notion of ZCBFs to ensure forward invariance of the safe set $S_S$. The ZCBF is defined by the authors in \cite{ames2017control} as follows:

\begin{Definition}[\textbf{ZCBF}] For the dynamical system \eqref{cont aff sys}, a continuously differentiable function $h:\mathbb R^n\rightarrow \mathbb R$ is called a ZCBF for the set $S_S$ if $h(x)<0$ for $x\in \textnormal{int}(S_S)$, $h(x) = 0$ for $x\in \partial S_S$, and there exists $\alpha\in \mathcal K$, such that 
\begin{align}\label{eq: ZCBF}
    \inf_{u\in \mathcal U}\{L_fh(x)+L_gh(x)u\}\leq \alpha(-h(x)) \quad \forall x\in S_S.
\end{align}

\end{Definition}

\noindent It is easy to see that if $h$ is a ZCBF, then it also satisfies Assumption \ref{Assum feas}. One special case of \eqref{eq: ZCBF} is {
\begin{align}\label{eq: ZCBF spec}
    \inf_{u\in \mathcal U}\{L_fh(x)+L_gh(x)u\}\leq -\rho h(x),
\end{align}
with $\rho> 0$, as discussed in \cite[Remark 6]{ames2017control}. We will use this special form \eqref{eq: ZCBF spec} in our main results to guarantee forward invariance of the safe set $S_S$.}

\subsection{Overview of {fixed}-time stability}
Next, we review the notion of fixed-time stability. The authors in \cite{polyakov2012nonlinear} define  
the origin to be an FxTS equilibrium of \eqref{cont aff sys} if it is Lyapunov stable and $\lim_{t\to T} x(t)=0$ where the time of convergence $T = T(x(0))$ is uniformly bounded for all $x(0)$, i.e., $\sup\limits_{x(0)\in \mathbb R^n}T(x(0)) <\infty$. The following sufficient conditions for FxTS of the origin are adapted from \cite{parsegov2012nonlinear}.

\begin{Lemma}\label{FxTS TH}
Let $V:\mathbb R^n\rightarrow \mathbb R$ be a continuously differentiable, positive definite, radially unbounded function, satisfying{\small\vspace{-15pt}
\begin{align}\label{eq: dot V old}
     \inf_{u\in \mathcal U}\{L_fV(x)+L_gV(x)u\} \leq -\alpha_1V(x)^{\gamma_1}-\alpha_2V(x)^{\gamma_2},
\end{align}}\normalsize 
for all $x\in \mathbb R^n\setminus\{0\}$, with $\alpha_1,\alpha_2>0$, $\gamma_1 = 1+\frac{1}{\mu}$ and $\gamma_2 = 1-\frac{1}{\mu}$ for some $\mu>1$. Then, the origin of \eqref{cont aff sys} is FxTS with $T$ that satisfies $T \leq \frac{\mu\pi}{2\sqrt{\alpha_1\alpha_2}}$.
\end{Lemma}

\noindent Inspired by \cite{lopez2019conditions}, we define a class of CLF for the system \eqref{cont aff sys}, which is used to encode the convergence of the system trajectories {to a compact set $S\subset \mathbb R^n$ within a user-defined, fixed time $T_{ud}$:}

\begin{Definition}[\textbf{FxT CLF-$S$}]\label{def: FxT CLF}
A continuously differentiable function $V:\mathbb R^n\rightarrow\mathbb R$ is called FxT CLF-$S$ for \eqref{cont aff sys} with parameters $\alpha_1,\alpha_2>0, \gamma_1 = 1+\frac{1}{\mu},\gamma_2 = 1-\frac{1}{\mu}$ with $\mu>1$, if $V$ is proper w.r.t. set $S$ and the following holds:{\small \vspace{-20pt}
\begin{equation}\label{fxts h ineq}
    \inf_{u\in \mathcal U}\{L_fV(x)+L_gV(x)u\}\leq -\alpha_1V(x)^{\gamma_1}-\alpha_2V(x)^{\gamma_2},
\end{equation}}\normalsize
for all $x\in \mathbb R^n \setminus S$, where the time of convergence $T$ satisfies $ T\leq \frac{\mu\pi}{2\sqrt{\alpha_1\alpha_2}}\leq T_{ud}$.
\end{Definition}

\section{Fixed-time stability under input constraints}\label{sec: rel inp bound van}
\subsection{Motivating example}
In this section, we review new FxTS conditions from our recent work in \cite{garg2021FxTSDomain}. These conditions are motivated by, and used in, the control synthesis under fixed-time convergence and input constraints, and are essential in guaranteeing the feasibility of the proposed QP. 
Consider, for the sake of illustration, a 1-dimensional control-affine system 
\begin{align}\label{eq: cont affine 1d}
    \dot x = f(x) + g(x)u,
\end{align}
where $f, g:\mathbb R\rightarrow\mathbb R$ are continuous functions. Suppose that the control objective is to drive the closed-loop trajectories of \eqref{eq: cont affine 1d} to a set $S_V \coloneqq \{x\; |\; V(x)\leq 0\}$ within a user-defined time $T_{ud}$, where $V:\mathbb R\rightarrow\mathbb R$ is continuously differentiable, and proper with respect to the set $S_V$. Additionally, consider the input constraints $u_m\leq u\leq u_M$ where $u_m<u_M$. To this end, following the work in \cite{ames2017control,nguyen2016exponential} and using the {FxTS} conditions from Lemma \ref{FxTS TH}, a {QP} can be formulated as follows:
\begin{subequations}\label{QP 1d old FxTS}
\begin{align}
\min_{u} \; \;\quad \frac{1}{2}u^2 &  \\
    \textrm{s.t.} \quad \begin{bmatrix}
    1 \\ -1 
    \end{bmatrix} u  \leq &  \begin{bmatrix}
    u_M\\-u_m
    \end{bmatrix},\\
    L_fV(x) + L_gV(x)u  \leq &-\alpha_1V(x)^{\gamma_1}  -\alpha_2V(x)^{\gamma_2} \label{C2 stab const 1d old},
\end{align}
\end{subequations}
for $x\notin S_V$, where $c>0$, and $\alpha_1, \alpha_2, \gamma_1, \gamma_2$ are chosen as $\alpha_1 = \alpha_2 = \frac{\mu\pi}{2T_{ud}}$, $\gamma_1 = 1+\frac{1}{\mu}$ and $\gamma_2 = 1-\frac{1}{\mu}$ with $\mu>1$. Existence of the solution of \eqref{QP 1d old FxTS} implies existence of a control input under which the closed-loop trajectories of \eqref{eq: cont affine 1d} reach the set $S_V$ within a fixed time $T$ that satisfies $T\leq \frac{\mu\pi}{2\sqrt{\alpha_1\alpha_2}}$; therefore, by setting $\alpha_1 = \alpha_2 = \frac{\mu\pi}{2T_{ud}}$, it follows that $T\leq T_{ud}$, i.e., the convergence will be achieved within the user-define time. The issue with the {QP} in \eqref{QP 1d old FxTS} is that it might not be feasible for all $x\in \mathbb R^n\setminus S_V$ due to the presence of the input constraints. To address the issue of infeasibility of a QP under multiple constraints, the authors in \cite{ames2017control} introduce a slack variable in the {CLF} constraint. Inspired from this, the new {FxTS} conditions are presented next. 

\subsection{New FxTS Lyapunov conditions}
\begin{Lemma}[\hspace{-0.1pt}\cite{garg2021FxTSDomain}]\label{Th: FxTS new}
Let $V:\mathbb R^n\rightarrow \mathbb R$ be a continuously differentiable, positive definite, radially unbounded function, satisfying
\begin{align}\label{eq: dot V new ineq}
\begin{split}
     \inf_{u\in \mathcal U}\{L_fV(x)+L_gV(x)u\} \leq & \  \delta_1V(x)-\alpha_1V(x)^{\gamma_1}\\
     & -\alpha_2V(x)^{\gamma_2},
\end{split}
\end{align}
for all $x\in \mathbb R^n\setminus\{0\}$, with $\alpha_1, \alpha_2>0$, $\delta_1\in \mathbb R$, $\gamma_1 = 1+\frac{1}{\mu}$, $\gamma_2 = 1-\frac{1}{\mu}$ for some $\mu>1$. Then, 
$D\subset\mathbb R^n$ is a FxT-DoA of the origin of \eqref{cont aff sys} with time $T>0$, where
{\small
\begin{align}
    D & = \begin{cases}\; \mathbb R^n; &  r<1,\\
    \left\{x\; |\; V(x)\leq k^\mu\left(\frac{\delta_1-\sqrt{\delta_1^2-4\alpha_1\alpha_2}}{2\alpha_1}\right)^\mu\right\}; & r\geq 1, 
    \end{cases},\label{new FxTS D est}\\
    T & \leq \begin{cases}\frac{\mu\pi}{2\sqrt{\alpha_1\alpha_2}};& r\leq 0,\\
     \frac{\mu}{\alpha_1k_1}\left(\frac{\pi}{2}-\tan^{-1}k_2\right); & 0\leq r<1,\\
     \frac{\mu k}{(1-k)\sqrt{\alpha_1\alpha_2}}; & r\geq 1, 
    \end{cases},\label{new FxTS T est}
\end{align}}\normalsize
where $r = \frac{\delta_1}{2\sqrt{\alpha_1\alpha_2}}$, $0<k<1$, $k_1 = \sqrt{\frac{4\alpha_1\alpha_2-\delta_1^2}{4\alpha_1^2}}$ and $k_2 = -\frac{\delta_1}{\sqrt{4\alpha_1\alpha_2-\delta_1^2}}$. 
\end{Lemma}

In comparison to Lemma \ref{FxTS TH}, Lemma \ref{Th: FxTS new} allows an additional (possibly, positive) term $\delta_1V$ in the upper bound of the time derivative of the Lyapunov function. {The main idea is to use \eqref{eq: dot V new ineq} in place of the constraint (9c),} with the parameters $\alpha_1, \alpha_2, \mu$ chosen such that $\frac{\mu\pi}{2\sqrt{\alpha_1\alpha_2}} = T_{ud}$, and with $\delta_1$ being a free, slack variable so that feasibility of the QP can be guaranteed. Then, the value of $\delta_1$ would dictate the FxT-DoA $D$.
To see how the condition \eqref{eq: dot V new ineq} can be used to guarantee FxTS in the presence of control input constraints, a new QP can be formulated as follows:
\begin{subequations}\label{QP 1d}
\begin{align}
\min_{u, \delta_1} \; \frac{1}{2}u^2 & +\frac{1}{2}\delta_1^2 +c\delta_1\\
    \textrm{s.t.} \quad \quad  \begin{bmatrix}
    1 \\ -1 
    \end{bmatrix} u  \leq &  \begin{bmatrix}
    u_M\\-u_m
    \end{bmatrix}, \label{C1 cont const 1d}\\
    L_fV(x) + L_gV(x)u  \leq & \;\delta_1 V(x)-\alpha_1V(x)^{\gamma_1}  -\alpha_2V(x)^{\gamma_2} \label{C2 stab const 1d},
\end{align}
\end{subequations}
for $x\notin S_V$, where $c>0$, and $\alpha_1, \alpha_2, \gamma_1, \gamma_2$ are chosen similarly as in \eqref{QP 1d old FxTS}. Note that when $\delta_1$ takes negative values, then out of \eqref{new FxTS T est} the bound on the time of convergence satisfies $T\leq \frac{\mu\pi}{2\sqrt{\alpha_1\alpha_2}} = T_{ud}$, and therefore, convergence within the user-defined time $T_{ud}$ can be achieved. Thus, the linear term $c\delta_1$ is introduced in the cost function with $c>0$ in order to penalize non-positive values of $\delta_1$.  Here, the term $\delta_1 V(x)$ in \eqref{C2 stab const 1d} can be thought of as a slack term, allowing for satisfaction of the constraint \eqref{C2 stab const 1d} in the presence of input constraints \eqref{C1 cont const 1d} as shown below.

\begin{Lemma}\label{lemma: qp 1d feas}
For each $x\notin S_V$, there exist $u(x)\in \mathbb R, \delta_1(x)\in \mathbb R$ satisfying \eqref{C1 cont const 1d}-\eqref{C2 stab const 1d}, i.e., the QP \eqref{QP 1d} is feasible for all $x\notin S_V$.
\end{Lemma}
\begin{pf}
Choose any $\bar u(x)\in [u_m, u_M]$ so that $\bar u(x)$ satisfies the input constraints. For $x\notin S_V$ we have that $V(x)>0$ and therefore, $$\bar \delta_1(x) \coloneqq \frac{L_fV(x) + L_gV(x)\bar u(x)+\alpha_1V(x)^{\gamma_1} +\alpha_2V(x)^{\gamma_2}}{V(x)},$$ is well-defined for all $x\notin S_V$. Note that with $u = \bar u(x), \delta_1 = \bar \delta_1(x)$, \eqref{C2 stab const 1d} is satisfied with equality. Thus, the pair $(\bar u(x), \bar \delta_1(x))$ satisfies \eqref{C1 cont const 1d}-\eqref{C2 stab const 1d} for all $x\notin S_V$, and thus, the {QP} \eqref{QP 1d} is feasible for all $x\notin S_V$. \end{pf}

Note that in prior work, e.g. \cite{ames2017control,nguyen2016exponential}, the slack term is used as 
\begin{align*}
    L_fV(x) + L_gV(x)u \leq \delta-cV(x),
\end{align*}
for some $c>0$. While this condition helps guarantee feasibility of the underlying QP, it does not guarantee that the function $V(x)$ reaches its zero sub-level set when $\delta>0$. Per Lemma \ref{Th: FxTS new}, it holds that the system trajectories reach the zero sub-level set of the function $V(x)$ even when $\delta>0$. This is a unique contribution of the new FxTS condition in Lemma \ref{Th: FxTS new}. 

\subsection{Relation of FxT-DoA with input constraints and time of convergence}\label{sec: DoA T U rel}

As mentioned above, the slack term $\delta_1$ is used to guarantee feasibility of the underlying {QP}. 
Now, the relation of this slack term with FxT-DoA, input constraints and the fixed time of convergence is explored. Let us hypothesize that the slack term corresponding to $\delta_1$ in the {QP} \eqref{QP 1d} characterizes the trade-off between the FxT-DoA and the time of convergence for given control input bounds, and between the FxT-DoA and the input bounds for a given fixed time of convergence. In the particular case when $\alpha_1 = \alpha_2 = \alpha$ for some $\alpha>0$, let us examine how the FxT-DoA $D$ in \eqref{new FxTS D est} is affected by the ratio $r^\star(x) = \frac{\delta^\star(x)}{2\alpha}$, where $(u^\star(x),\delta^\star(x))$ is the optimal solution of the QP \eqref{QP 1d}. Define $r_M = \sup_{x\in S}r^\star(x)$. For $r_M<1$, it holds that $D = \mathbb R^n$, which is the largest possible FxT-DoA. For $r_M\geq 1$, it holds that
\begin{align*}
    D & = \left\{x\; |\; V(x)\leq \inf_{z\in S}k^\mu\left(\frac{\delta_1^\star(z)-\sqrt{(\delta_1^\star(z))^2-4\alpha_1\alpha_2}}{2\alpha_1}\right)^\mu\right\} \\
    & =  \left\{x\; |\; V(x)\leq  \inf_{z\in S} k^\mu\left(r^\star(z)-\sqrt{(r^\star(z))^2-1}\right)^\mu\right\}.
\end{align*}
It can be readily verified that $\left(r-\sqrt{r^2-1}\right)$ is a monotonically decreasing function for $r\geq 1$ and therefore, it holds that
\begin{align}\label{eq: DoA rm}
    D & =  \left\{x\; |\; V(x)\leq k^\mu\left(r_M-\sqrt{r_M^2-1}\right)^\mu\right\}.
\end{align}

\begin{figure}[b]
    \centering
        \includegraphics[width=0.95\columnwidth,clip]{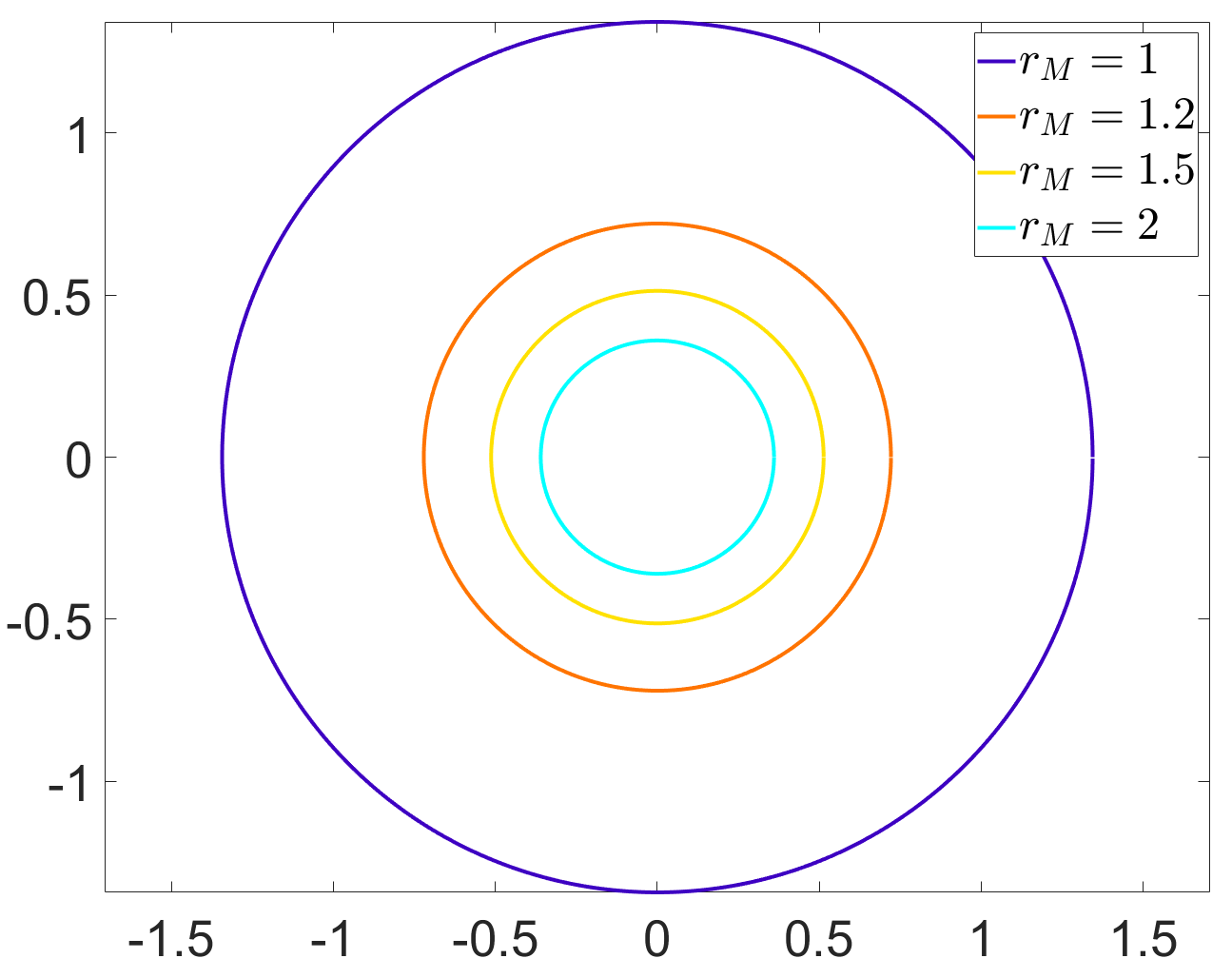}
    \caption{FxT-DoA $D$ for $k = 0.95$ and $\mu = 2$.}\label{fig:DoA rm}
\end{figure}

{Figure \ref{fig:DoA rm} plots the FxT-DoA $D$ for $V(x) = \frac{1}{2}\|x\|^2$. It can be concluded, based on \eqref{eq: DoA rm}, that the domain $D$ shrinks as $r_M$ increases, which is also demonstrated in Figure \ref{fig:DoA rm}. Thus, the larger the FxT-DoA $D$ is, the smaller the value of $r_M$ is. Now, the following Lemma establishes the closed-form expression for the slack variable $\delta_1^\star(x)$ in the particular case when the control input is saturated with $u^\star(x) = u_M$ for the QP \eqref{QP 1d}. 

\begin{Lemma}\label{lemma: delta closed form}
Consider the {QP} \eqref{QP 1d} and let $a(x) \coloneqq L_fV(x) + L_gV(x)u_M+cV(x)+\alpha_1V(x)^{\gamma_1}  +\alpha_2V(x)^{\gamma_2}$. Then, for $x\in S_M$ where $$S_M\coloneqq \{x\; |\; a(x)L_gV(x)+u_MV(x)^2<0, a(x)>0\},$$ the optimal value of $u$ is given as $u^\star(x) = u_M$, and the optimal value of $\delta_1$ is given as
\begin{align}\label{eq: delta closed 1d}
\begin{split}
    \delta_1^\star (x)  = & \frac{L_fV(x)+L_gV(x)u_M}{V(x)} +\alpha_1V(x)^{\gamma_1-1}  \\
    & +\alpha_2V(x)^{\gamma_2-1}.
\end{split}
\end{align}
\end{Lemma}

\noindent The proof is provided in Appendix \ref{app: KKT analysis}. Recall that the {QP} \eqref{QP 1d} is defined for $x\notin S_V$. Note that for $x\in S_M\setminus S_V$, $L_gV(x)<0$ (per definition of the set $S_M$) and $V(x)>0$ (since $V$ is proper w.r.t. the set $S_V$).
Define $ r^\star(x) \coloneqq \frac{\delta_1(x)^\star}{2\sqrt{\alpha_1\alpha_2}}$ so that
\begin{align}\label{eq: r closed 1d}
\begin{split}
    r^\star(x) = & \left(\frac{L_fV(x)+L_gV(x)u_M}{V(x)} \right)\frac{T_{ud}}{\mu\pi}+\frac{1}{2}V(x)^{\gamma_1-1}\\
    &+\frac{1}{2}V(x)^{\gamma_2-1}.
\end{split}
\end{align}
For a given $x\in S_M\setminus S_V$, the expression for the optimal value of $r$ in \eqref{eq: r closed 1d} is a function of the fixed time of convergence $T_{ud}$, and of the upper-bound on the control input $u_M$. Since the region of interest is the one from where the closed-loop trajectories converge to the set $S_V$, consider \eqref{eq: r closed 1d} for the case when $x\in S\coloneqq S_M\cap \{x\; |\; L_fV(x) + L_gV(x)u_M<0\}$. For the restricted domain $S$, it is clear that $r^\star$ decreases as  the control authority increases (i.e., as $u_M$ increases), or the time of convergence requirement relaxes (i.e., as $T_{ud}$ increases). 
Based on this and \eqref{eq: DoA rm}, it is concluded that for a given input bound, relaxing $T_{ud}$ results in a larger FxT-DoA $D$, and conversely, for a given $T_{ud}$, increasing the input bounds results into a larger FxT-DoA. Thus, the hypothesis that the slack variable $\delta_1$ characterizes the trade-off between the FxT-DoA, time of convergence, and the input bounds is verified.}

\section{Main results}\label{sec: main results}
\subsection{QP formulation}
In this section, a control synthesis approach is presented to address Problem \ref{P reach S}. First, a {QP} is designed with guaranteed feasibility, then it is shown that the solution of the {QP} is a continuous function of $x$, and finally, some sufficient conditions are presented under which Problem \ref{P reach S} is solved by the optimal solution of the proposed QP. In what follows, unless specified otherwise, all the results are presented under Assumptions \ref{assum: sets Ss Sg}-\ref{Assum feas}.
Define $z = \begin{bmatrix}v^T & \delta_1 & \delta_2\end{bmatrix}^T\in \mathbb R^{m+2}$, and consider the following {QP}:
\begin{subequations}\label{QP gen}
\begin{align}
\min_{z\in \mathbb R^{m+2}} \; \frac{1}{2}z^THz & + F^Tz\\
    \textrm{s.t.} \quad \; A_uv  \leq & \; b_u, \label{C1 cont const}\\
    L_fh_G(x) + L_gh_G(x)v  \leq & \; \delta_1h_G(x)-\alpha_1\max\{0,h_G(x)\}^{\gamma_1} \nonumber\\
    & -\alpha_2\max\{0,h_G(x)\}^{\gamma_2} \label{C2 stab const}\\
    L_fh_S(x) + L_gh_S(x)v \leq &-\delta_2h_S(x),\label{C3 safe const}
\end{align}
\end{subequations}
where $H = \textrm{diag}\{w_{u_1},\ldots, w_{u_m}, w_1, w_2\}$ is a diagonal matrix consisting of positive weights $w_{u_i}, w_i>0$, $F = \begin{bmatrix}\mathbf 0_m^T & q_1 & 0\end{bmatrix}$ with $q_1>0$ and $\mathbf 0_m\in \mathbb R^m$ a column vector consisting of zeros. The parameters $\alpha_1, \alpha_2, \gamma_1, \gamma_2$ are fixed, and are chosen as $\alpha_1 = \alpha_2 = \frac{\mu\pi}{2T_{ud}}$, $\gamma_1 = 1+\frac{1}{\mu}$ and $\gamma_2 = 1-\frac{1}{\mu}$ with $\mu>1$. The choice of these parameters does not affect the feasibility of the {QP}, as discussed below. In principle, any value of $\mu$ can be chosen as long as it is greater than 1. The linear term $F^Tz = q_1\delta_1$ in the objective function of \eqref{QP gen} penalizes the positive values of $\delta_1$ (see Theorem \ref{Th: d1 d2 P1 solve} for details on why $\delta_1$ being non-positive could be useful). Constraint \eqref{C1 cont const} guarantees that the control input satisfies the control input constraints. Per Lemma \ref{Th: FxTS new}, the constraint \eqref{C2 stab const} guarantees convergence and the constraint \eqref{C3 safe const} ensures safety.

The slack terms corresponding to $\delta_1, \delta_2$ allow the upper bounds of the time derivatives of $h_S(x)$ and $h_G(x)$, respectively, to have a positive term for $x$ such that $h_S(x)<0$ and $h_G(x)>0$. This ensures the feasibility of the {QP} \eqref{QP gen} for all $x$, as demonstrated below.

\begin{Lemma}\label{lemma: QP feasibility}
{Under Assumptions \ref{assum: sets Ss Sg}-\ref{Assum feas}, for each $x\in S_S\setminus S_G$, there exist $v(x)\in \mathbb R^m, \delta_1(x)\in \mathbb R,\delta_2(x)\in \mathbb R$ satisfying \eqref{C1 cont const}-\eqref{C3 safe const}, i.e., the {QP} \eqref{QP gen} is feasible for all $x\in  S_S\setminus S_G$.}
\end{Lemma}

\begin{pf}
Since $x\notin S_G$, it holds that $h_G(x)>0$. Consider the following two cases separately: $h_S(x) = 0$ and $h_S(x)<0$. 

First, let $h_S(x)<0$, i.e., $x\in \textrm{int}(S_S)$. Since $\mathcal U$ is non-empty, there exists $v = \bar v$ in $\mathcal U$ such that $\eqref{C1 cont const}$ is satisfied. Choose $\bar \delta_2 \coloneqq \frac{L_fh_S(x) + L_gh_S(x)\bar v}{-h_S(x)}$, so that \eqref{C3 safe const} is satisfied with equality. 
Also, for $x\in \textrm{int}(S_S)\setminus S_G$, it holds that $h_G(x)>0$. Define $\bar \delta_1 \coloneqq \frac{L_fh_G(x) + L_gh_G(x)\bar v+ \alpha_1h_G(x)^{\gamma_1}+ \alpha_2h_G(x)^{\gamma_2}}{h_G(x)}$, so that \eqref{C2 stab const} holds with equality. Thus, for the case when $h_S(x)<0$, there exists $ (\bar v, \bar \delta_1, \bar \delta_2)$ such that \eqref{C1 cont const}-\eqref{C3 safe const} holds. 

Next, let $h_S(x) = 0$, i.e., $x\in \partial S_S$. Per Assumption \ref{Assum feas}, it holds that that there exists $v = \tilde v\in \mathcal U$ such that \eqref{C3 safe const} holds. Since $h_S(x) = 0$, any value of $\delta_2$ is feasible, and hence, one can choose $\delta_2 = 0$. Hence, the choice of $(v, \delta_1, \delta_2) = (\tilde v, \bar \delta_1, 0)$ satisfies \eqref{C1 cont const}-\eqref{C3 safe const}. Thus, the {QP} \eqref{QP gen} is always feasible. 
\end{pf}


One of the main novelties of the QP \eqref{QP gen} is the way the slack variables are introduced in the FxT-CLF and the ZCBF constraints. Not only these slack variables guarantee that the QP remains feasible under input constraints, they also do not jeopardise forward-invariance of the set $S_S$ or convergence to the goal set $S_G$. In particular, keeping in mind the discussion given after the proof of Lemma \ref{lemma: delta closed form}, the traditional ZCBF condition in the prior work, e.g. \cite{ames2017control,glotfelter2017nonsmooth,glotfelter2018boolean,li2018formally,srinivasan2018control}, uses a particular value of $\delta_2$ for the safety constraint. In the proposed formulation, this parameter is kept as a free variable, so that both safety and feasibility of the QP can be guaranteed simultaneously. 


\subsection{Continuity of the solution of the QP}
Guaranteeing forward invariance of the safe set $S_S$ is based on Lemma \ref{Th: Nagumo for inv}, which in turn requires the uniqueness of the system solutions. Traditionally, Lipschitz continuity of the right-hand side of \eqref{cont aff sys} is utilized in order to guarantee existence and uniqueness of the solutions of \eqref{cont aff sys}, see e.g., \cite{ames2017control,xu2015robustness,lindemann2019control}. 
When the right-hand side of \eqref{cont aff sys} is only continuous, existence and uniqueness of the solutions can be established using the results in \cite[Section 3.15-3.18]{agarwal1993uniqueness} (see Lemma \ref{Thm: exist unique sol QP}). To this end, first, it is shown that the control input $u(x)$ as a solution of the {QP} \eqref{QP gen} is continuous in its arguments. Define $A:\mathbb R^n \rightarrow \mathbb R^{(2m+2)\times (m+2)}$ and $b:\mathbb R^n\rightarrow \mathbb R^{(2m+2)}$ as {\small
\begin{align*}
A(x)& \coloneqq \begin{bmatrix} A_u & {{\mathbf 0_{2m}}} & {{\mathbf 0_{2m}}}\\
L_gh_G(x) & -h_G(x) & 0\\
L_gh_S(x) & 0 & h_S(x)
\end{bmatrix}, \;
b(x)  \coloneqq \begin{bmatrix}b_u \\ b_2(x) \\ -L_fh_S(x)\end{bmatrix}    
\end{align*}}\normalsize
where $b_2(x) \coloneqq -L_fh_G(x)-\alpha_1\max\{0,h_G(x)\}^{\gamma_1}-\alpha_2\max\{0,h_G(x)\}^{\gamma_2}$ and $\mathbf 0_{2m}\in \mathbb R^{2m}$ is a column vector consisting of zeros. Also, define the functions $G_i(x,z) \coloneqq A_i(x)z-b_i(x)$ where $A_i\in \mathbb R^{1\times (m+2)}$ is the $i$-th row of the matrix $A$, and $b_i\in \mathbb R$ the $i$-th element of $b$, so that the constraints \eqref{C1 cont const}-\eqref{C3 safe const} can be written as $G_i(x,z)\leq 0$ for $i\in 1, 2, \ldots, 2m+2$. Let $z^\star $ and $\lambda^\star \in \mathbb R_+^{2m+2}$ denote the optimal solution of \eqref{QP gen}, and the corresponding optimal Lagrange multiplier, respectively. The following assumption is made to prove the main results of this section. 

\begin{Assumption}\label{assum hs hg scs}
The strict complementary slackness holds for \eqref{QP gen} for all $x\in \textnormal{int}(S_S)\setminus S_G$, i.e., for each $i \in \{1, 2, \dots, 2m+2\}$, it holds that either $\lambda_i^\star  > 0$ or $G_i(x,z^\star ) < 0$ for all $x\in \textnormal{int}(S_S)\setminus S_G$.
\end{Assumption}

Complementary slackness, i.e., $\lambda_i^\star  G_i(x,z^\star ) = 0$, for all $i = 1, \ldots, 2m+2$, is a both necessary and sufficient condition for optimality of the solution for {QP}s \cite[Chapter 5]{boyd2004convex}. Note that this condition permits that for some $i$, both $\lambda_i^\star  = 0$ and $G_i(x,z^\star ) = 0$. \textit{Strict} complementary slackness rules out this possibility, and requires that for each $i$, either $\lambda_i^\star $ or $G_i(x,z^\star )$ is non-zero.

\begin{Theorem}\label{Th: QP sol cont}
Under Assumptions \ref{assum: sets Ss Sg} and \ref{assum hs hg scs}, the solution $z^\star:\mathbb R^n\rightarrow\mathbb R^{m+2}$ of \eqref{QP gen} is continuous on $\textnormal{int}(S_S)\setminus S_G$.  
\end{Theorem}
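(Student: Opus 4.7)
My plan is to apply the implicit function theorem to the KKT system of \eqref{QP gen}, using strict complementary slackness to freeze the active set locally near any $x_0 \in \textnormal{int}(S_S)\setminus S_G$. The argument proceeds in three stages: (i) uniqueness of the primal-dual KKT pair, (ii) verification that the active constraint gradients are linearly independent (LICQ), and (iii) a standard continuous-branch argument off the KKT residual.

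Uniqueness of the primal optimum $z^*(x_0)$ follows from strict convexity ($H\succ 0$) together with feasibility (Lemma~\ref{lemma: QP feasibility}); KKT multipliers $\lambda^*(x_0)$ exist accordingly. Let $\mathcal{A}(x_0)$ be the active set. By Assumption~\ref{assum hs hg scs}, $\lambda_i^*>0$ exactly when $i\in\mathcal{A}(x_0)$, and $G_i(x_0,z^*)<0$ for all other $i$. The key structural step I would then carry out is to show LICQ of the rows $\{A_i(x_0)\}_{i\in\mathcal{A}(x_0)}$ by exploiting that $\delta_1$ appears only in \eqref{C2 stab const} with coefficient $-h_G(x_0)\neq 0$ (as $x_0\notin S_G$), $\delta_2$ appears only in \eqref{C3 safe const} with coefficient $h_S(x_0)\neq 0$ (as $x_0\in\textnormal{int}(S_S)$), and every row of $A_u$ has zero in the $(\delta_1,\delta_2)$ slots. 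Reading any null combination of active rows component-wise forces the multipliers on \eqref{C2 stab const} and \eqref{C3 safe const} to vanish; the leftover relation is among active rows of $A_u$, and those are independent because $u_{\min_i}<u_{\max_i}$ prevents $v_i\leq u_{\max_i}$ and $-v_i\leq -u_{\min_i}$ from being simultaneously active.

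With LICQ and $H\succ 0$, the bordered KKT matrix
\[
\begin{bmatrix} H & A_{\mathcal{A}(x_0)}(x_0)^T \\ A_{\mathcal{A}(x_0)}(x_0) & 0 \end{bmatrix}
\]
is nonsingular. Since $h_G,h_S\in\mathcal{C}^1$ and $f,g$ are continuous, $A(x)$ and $b(x)$ depend continuously on $x$; note also that $\max\{0,h_G(x)\}^{\gamma_j}=h_G(x)^{\gamma_j}$ is continuous on $\textnormal{int}(S_S)\setminus S_G$, where $h_G>0$. The reduced KKT residual $\mathcal{F}(x,z,\lambda_{\mathcal{A}(x_0)})$ is therefore continuous in $x$ and linear (hence smooth) in $(z,\lambda_{\mathcal{A}(x_0)})$. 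I would invoke the continuous-parameter implicit function theorem to obtain a continuous local branch $x\mapsto (z(x),\lambda_{\mathcal{A}(x_0)}(x))$ satisfying $\mathcal{F}=0$ and matching the optimum at $x_0$. Shrinking the neighborhood so that the strict inequalities $\lambda_i(x)>0$ for $i\in\mathcal{A}(x_0)$ and $G_i(x,z(x))<0$ for $i\notin\mathcal{A}(x_0)$ persist by continuity, padding with $\lambda_i(x)\equiv 0$ off the active set yields a full vector of KKT multipliers; strict convexity then forces $z(x)=z^*(x)$, proving continuity of $z^*(\cdot)$ at $x_0$.

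The main obstacle, and the reason this argument qualifies as operating under \emph{relaxed} regularity, is the LICQ verification: classical CBF/CLF continuity proofs typically impose nondegeneracy such as $\nabla h_S(x)\neq 0$ or $L_g h_S(x)\neq 0$, whereas here the deliberately introduced slacks $\delta_1,\delta_2$ inject two guaranteed independent directions into the constraint-gradient set and absorb any degeneracy in the physical CLF/CBF constraints. A secondary technicality is that $\mathcal{F}$ is only continuous (not $\mathcal{C}^1$) in $x$, because $h_G,h_S$ are merely $\mathcal{C}^1$ and $f,g$ are merely continuous, so one uses the continuous-parameter form of the implicit function theorem, which delivers continuity of the branch, precisely what the theorem claims.
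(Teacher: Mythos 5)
Your proposal is correct and follows essentially the same route as the paper: the paper verifies exactly the same three ingredients---linear independence of the active constraint gradients via the $\delta_1,\delta_2$ columns (nonzero because $h_G\neq 0$ and $h_S\neq 0$ on $\textnormal{int}(S_S)\setminus S_G$) together with non-simultaneous activity of the box constraints, the second-order sufficient condition from $H\succ 0$, and strict complementary slackness---and then cites Robinson's perturbation theorem \cite{robinson1974perturbed} in place of your hand-rolled implicit-function/frozen-active-set argument, which is precisely the mechanism underlying that theorem. No gaps.
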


\noindent The proof is provided in Appendix \ref{app: proof Thm QP sol cont}.

\textbf{A note on comparison to earlier work}:
Note that the above result guarantees that the control input defined as $u(x) = v^\star (x)$ is continuous on $\textnormal{int}(S_S)\setminus S_G$. 
Under Assumption \ref{assum hs hg scs}, the authors in \cite{fiacco1976sensitivity} show that the solution is continuously differentiable if the objective function and the constraints functions are twice continuously differentiable. The authors in \cite{ames2017control} assume that the functions $f,g$ and the Lie derivatives $L_fh_S, L_fh_G, L_gh_S, L_gh_G$ are locally Lipschitz continuous to show Lipschitz continuity of the solution of {QP} in the absence of control input constraints. Under similar assumptions, the authors in \cite{glotfelter2017nonsmooth} show that the solution of {QP} is guaranteed to be Lipschitz continuous (in the absence of input constraints) if the {CBF} constraints are inactive, i.e., the constraints are satisfied with strict inequality at the optimal solution $z^\star $, which is same as Assumption \ref{assum hs hg scs}. Note that in the presented formulation, the only requirement is that the functions $f,g$ are continuous, and $h_S, h_G$ continuously differentiable in $x$, which is a relaxation of the prior assumptions. Note also that the authors of \cite{glotfelter2017nonsmooth} extend their results in \cite{glotfelter2018boolean} by utilizing the theory of non-smooth analysis, and strong forward invariance of sets even if the control input is not continuous. Under similar assumptions, the results in \cite{usevitch2020strong} utilize the concept of Clarke tangent cones to guarantee strong forward invariance when the control input is only Lebesgue measurable. 

Next, it is shown that closed-loop trajectories of \eqref{cont aff sys} under $u = v^\star $ exist and are unique. 

\begin{Lemma}\label{Thm: exist unique sol QP}
Let Assumptions \ref{assum: sets Ss Sg}-\ref{assum hs hg scs} hold. 
If the solution of \eqref{QP gen}, given as $z^\star  = [v^\star (\cdot)^T  \; \delta_1^\star (\cdot)\; \delta_2^\star (\cdot)]^T$, satisfies $\mathfrak{d}_1\coloneqq \sup\limits_{\textnormal{int}(S_S)\setminus S_G}\delta_1^\star (x)<\infty$, then there exists a neighborhood $D$ of the set $S_G$ such that the closed-loop trajectory under $u(x) = v^\star (x)$ exists and is unique for all $t\geq 0$ and for all $x(0)\in D$. Furthermore, if $\mathfrak{d}_1\leq 0$, then the result holds with $D = S_S$. 

\end{Lemma}
\begin{pf}
The proof is based on \cite[Theorem 3.18.1]{agarwal1993uniqueness}. Using \cite[Theorem 3.15.1]{agarwal1993uniqueness} and choosing a Lyapunov candidate $\mathfrak{v} = \frac{1}{2}|y|^2$, it can be shown that $y\equiv 0$ is the unique solution of $\dot y = 0$ for $y(0) = 0$. Theorem \ref{Th: QP sol cont} guarantees that the solution of the {QP} \eqref{QP gen} is continuous, which implies continuity of the closed-loop system dynamics \eqref{cont aff sys} when $u(x) \coloneqq v^\star (x)$. Note that $h_G(x) = 0$ for $x\in \partial S_G$ and $h_G(x)>0$ for $x\notin S_G$, i.e., the function $h_G$ is positive definite with respect to the set $S_G$. Define $\phi(y)\coloneqq \mathfrak{d}_1y-\alpha_1\sign(y)|y|^{\gamma_1}-\alpha_2\sign(y)|y|^{\gamma_2}$. Per Lemma \ref{Th: FxTS new}, it holds that there exists a neighborhood $D_y\subset\mathbb R$ of the origin such that for all $y\in D_y$, $\phi(y)\leq 0$. Thus, there exists a function $g$ defined as $g(t,V) = 0$ that satisfies condition (i) of \cite[Theorem 3.18.1]{agarwal1993uniqueness}, the closed-loop dynamics of \eqref{cont aff sys} satisfies the condition (ii), and there exists a function $V$ defined as $V(x) = h_G(x)$ that satisfies the condition (iii).
Thus, using \cite[Theorem 3.18.1]{agarwal1993uniqueness}, there exists $\tau>0$ such that the solution of the closed-loop system \eqref{cont aff sys} exists and is unique for all $x(0)\in D = \{x\; |\; V(x)\in D_y\}$ and all $t\in [0, \tau)$. Since the closed-loop solution $x(t)$ is bounded in the compact set $D$, the solution is complete (see \cite[Ch2., Theorem 1]{aubin2012differential}), and thus, $\tau = \infty$. 

Finally, in the case when $\mathfrak d_1\leq 0$, it holds that the $D_y = \mathbb R$, and thus, the result holds with $D = S_S$. 
\end{pf}

\subsection{Safety and fixed-time convergence}
\noindent Finally, it is shown that under some conditions, the solution of \eqref{QP gen} solves Problem \ref{P reach S}. 

\begin{Theorem}\label{Th: d1 d2 P1 solve}
Let Assumptions \ref{assum: sets Ss Sg}-\ref{assum hs hg scs} hold. If the solution of \eqref{QP gen}, given as $z^\star  = [v^\star (\cdot)^T  \; \delta_1^\star (\cdot)\; \delta_2^\star (\cdot)]^T$, satisfies $\delta_1^\star (x) \leq 0, \ \ \forall x\in \textnormal{int}(S_S)$, then, $u(\cdot) = v^\star (\cdot)$ solves Problem \ref{P reach S} for all $x(0)\in \textnormal{int}(S_S)$, i.e., $D = \textnormal{int}(S_S)$. 
\end{Theorem}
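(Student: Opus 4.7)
The statement decomposes naturally into three requirements: $u(x)\in\mathcal U$, forward invariance of $S_S$, and fixed-time convergence to $S_G$. My plan is to read each requirement off one of the QP constraints and then invoke previously established results for the convergence-time estimate. By Lemma \ref{lemma: QP feasibility} the QP is feasible for every $x\notin S_G$, and under Assumption \ref{assum hs hg scs}, Theorem \ref{Th: QP sol cont} gives continuity of the optimizer $z^*(x)$ on $\textnormal{int}(S_S)\setminus S_G$. Combined with the uniqueness result cited from \cite{agarwal1993uniqueness}, this yields a well-defined closed-loop trajectory, and \eqref{C1 cont const} immediately delivers $u(x)=v^*(x)\in\mathcal U$.

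For safety I would invoke \eqref{C3 safe const}, which reads $L_fh_s+L_gh_sv^*\leq -\delta_2^* h_s$. Evaluated at any boundary point $x\in\partial S_S$ we have $h_s(x)=0$, so the constraint collapses to $L_fh_s(x)+L_gh_s(x)v^*(x)\leq 0$, which is exactly Nagumo's tangential condition. Hence no trajectory starting in $S_S$ can cross $\partial S_S$, and $x(t)\in S_S$ for all $t\geq 0$ whenever $x(0)\in S_S$. Equivalently, a comparison argument applied to the ZCBF-type inequality $\dot h_s\leq -\delta_2^* h_s$ gives $h_s(x(t))\leq h_s(x(0))\exp\bigl(-\int_0^t \delta_2^*(x(\tau))\,d\tau\bigr)\leq 0$ irrespective of the sign of $\delta_2^*$.

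For the fixed-time convergence I would specialize \eqref{C2 stab const} to $x\notin S_G$, where $\max\{0,h_g(x)\}=h_g(x)>0$, obtaining
\begin{equation*}
\dot h_g\leq \delta_1^*(x)\,h_g-\alpha_1 h_g^{\gamma_1}-\alpha_2 h_g^{\gamma_2}.
\end{equation*}
The hypothesis $\delta_1^*(x)\leq 0$ together with $h_g(x)>0$ outside $S_G$ renders the first term non-positive, so it can be dropped, yielding $\dot h_g\leq -\alpha_1 h_g^{\gamma_1}-\alpha_2 h_g^{\gamma_2}$. Applying Theorem \ref{FxTS TH} with the sharper estimate from Remark \ref{rem: new T bound} and the chosen parameters $\alpha_1=\alpha_2=\frac{\mu\pi}{2T_{ud}}$, $\gamma_1=1+1/\mu$, $\gamma_2=1-1/\mu$ gives a settling-time bound $T\leq \frac{\mu\pi}{2\sqrt{\alpha_1\alpha_2}}=T_{ud}$. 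Combined with the safety and input-constraint guarantees, this shows $u(\cdot)=v^*(\cdot)$ solves Problem \ref{P reach S} for every $x(0)\in S_S$. The only delicate point, which I expect to be the main obstacle, is that continuity of $v^*$ is guaranteed only on $\textnormal{int}(S_S)\setminus S_G$; but since Nagumo confines trajectories to $S_S$ and the bound $T_{ud}$ caps the time spent outside $S_G$, well-definedness of the closed-loop solution is delivered by Assumptions \ref{assum: sets Ss Sg}-\ref{assum hs hg scs} together with the preceding theorems, and the role of the sign hypothesis on $\delta_1^*$ is precisely to recover the clean FxTS inequality needed for the time estimate.
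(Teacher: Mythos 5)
Your proposal is correct and follows essentially the same route as the paper's proof: continuity of $v^*$ from Theorem \ref{Th: QP sol cont} plus the uniqueness results of \cite{agarwal1993uniqueness} to get a well-defined closed-loop solution, Nagumo's condition from \eqref{C3 safe const} evaluated on $\partial S_S$ for forward invariance, and the hypothesis $\delta_1^*\leq 0$ applied to \eqref{C2 stab const} to recover the clean FxTS differential inequality with settling time $T\leq \frac{\mu\pi}{2\sqrt{\alpha_1\alpha_2}}=T_{ud}$. The only cosmetic difference is that you cite Theorem \ref{FxTS TH} with Remark \ref{rem: new T bound} where the paper invokes the $\delta_1\leq 0$ branch of Theorem \ref{Th: FxTS new}; these give the identical bound.
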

\begin{pf}
First, the convergence of the closed-loop trajectories $x(t)$ to the set $S_G$ within the user-defined time $T_{ud}$ is shown.  Since $\delta_1^\star (x)\leq 0$, per Lemma \ref{Th: FxTS new}, it holds that the closed-loop trajectories of \eqref{cont aff sys} with $u(x) \coloneqq v^\star (x)$ reach the set $S_G$ within fixed time $T\leq \frac{\mu\pi}{2(\alpha_1 \alpha_2)^{\frac{1}{2}}} = T_{ud}$, i.e., within the user-defined time $T_{ud}$ for all $x(0)\in \textnormal{int}(S_S)$. 

Next, it is shown that the closed-loop trajectories of \eqref{cont aff sys} satisfy $x(t)\in \textnormal{int}(S_S)$ for all $t\leq T_{ud}$ under $u(x) \coloneqq v^\star (x)$. From Lemma \ref{Thm: exist unique sol QP}, it holds that the closed-loop solution of \eqref{cont aff sys} exists and is unique under $u = v^\star $ for all $0\leq t\leq T_{ud}$ and for all $x(0)\in \textnormal{int}(S_S)$. Using the similar arguments as in \cite[Theorem 1]{ames2017control}, it can be shown that the set $\textnormal{int}(S_S)$ is forward-invariant. Therefore, the control input $u(x) \coloneqq v^\star (x)$ solves Problem \ref{P reach S} for all $x(0)\in \textnormal{int}(S_S)$. 
\end{pf}

\begin{Remark}
As pointed out in \cite{ames2017control}, the conflict between safety and the convergence constraint require a non-zero slack term for satisfaction of \eqref{C2 stab const}-\eqref{C3 safe const} together. With this observation and keeping in mind the discussion in Section \ref{sec: rel inp bound van}, one can readily conclude that if the control-input bounds or the user-defined time $T_{ud}$ is sufficiently large, then it is possible to satisfy \eqref{C2 stab const} with $\delta_1\leq 0$. 
\end{Remark}

\begin{Remark}
Utilizing the notion of {FxT CLF} from Definition \ref{def: FxT CLF}, it can be shown that the function $h_G$ being {FxT} {CLF}-$S_G$ is a sufficient condition for the solution of the QP \eqref{QP gen} to generate a control input that solves Problem \ref{P reach S}. In particular, if $h_G$ is an {FxT} {CLF}-$S_G$ for \eqref{cont aff sys} with parameters $\alpha_1, \alpha_2, \gamma_1, \gamma_2$ as defined in \eqref{QP gen}, then the {QP} \eqref{QP gen} with $\delta_1 = 0$ is feasible {for all $x\in \textnormal{int}(S_S)\setminus S_G$}, i.e., the solution $(v^\star (x),\delta_2^\star (x))$ of the {QP} \eqref{QP gen} after fixing $\delta_1 = 0$ exists for all $x\in \textnormal{int}(S_S)\setminus S_G$. 
\end{Remark}

Next, some cases are listed when the solution of {QP} \eqref{QP gen} might not solve Problem \ref{P reach S} with the specified time constraint and from all initial conditions, but it still renders the closed-loop trajectories safe, and convergent to the set $S_G$ within some fixed time. 


\begin{Theorem}\label{Th: delta a1 a2 Th 2}
Under Assumptions \ref{assum: sets Ss Sg}-\ref{assum hs hg scs}, the following hold: 
\begin{itemize}
    \item[1)] If the solution of \eqref{QP gen} satisfies {\small
\begin{align}\label{eq: delta1 a1 a2 cond 2}
   \mathfrak{d}_1 \coloneqq \sup_{x\in S_S\setminus S_G}\delta_1^\star(x)& < 2\sqrt{\alpha_1\alpha_2} = \frac{\mu\pi}{T_{ud}},
\end{align}}\normalsize
then, for all $x(0)\in \textnormal{int}(S_S)$, the closed-loop trajectories $x(t)$ of \eqref{cont aff sys} under $u(x)\coloneqq v^\star(x)$ reach the set $S_G$ in a fixed time $$T_1\leq \frac{\mu}{\alpha_1\mathrm{k}_1}\left(\frac{\pi}{2}-\tan^{-1}\mathrm{k}_2\right),$$ where $\mathrm{k}_1 \coloneqq \sqrt{\frac{4\alpha_1\alpha_2-\mathfrak{d}_1}{4\alpha_1^2}}$ and $\mathrm{k}_2 \coloneqq -\frac{\mathfrak{d}_1}{\sqrt{4\alpha_1\alpha_2-\mathfrak{d}_1}}$, while satisfying $x(t)\in S_S$ for all $t\geq 0$, i.e., $D = \textnormal{int}(S_S)$. 
\item[2)] If \eqref{eq: delta1 a1 a2 cond 2} does not hold, then there exists $D\subset \textnormal{int}(S_S)$ such that for all $x(0)\in D$, the closed-loop trajectories satisfy $x(t)\in \textnormal{int}(S_S)$ for all $t\geq 0$ and reach the goal set $S_G$ within a fixed time $$T_2\leq  \frac{\mu k}{(1-k)\sqrt{\alpha_1\alpha_2}},$$ where $0<k<1$, where $D$ is the largest sub-level set of the function $h_G$ in the set $D_G \cap \textnormal{int}(S_S)$, with $D_G = \{x\; | \; h_G(x)\leq k^\mu\left(\frac{\mathfrak{d}_1-\sqrt{\mathfrak{d}_1-4\alpha_1\alpha_2}}{2\alpha_1}\right)^\mu\}$.
\end{itemize}
\end{Theorem}

\begin{figure}[t]
    \centering
    \includegraphics[width=0.9\columnwidth,clip]{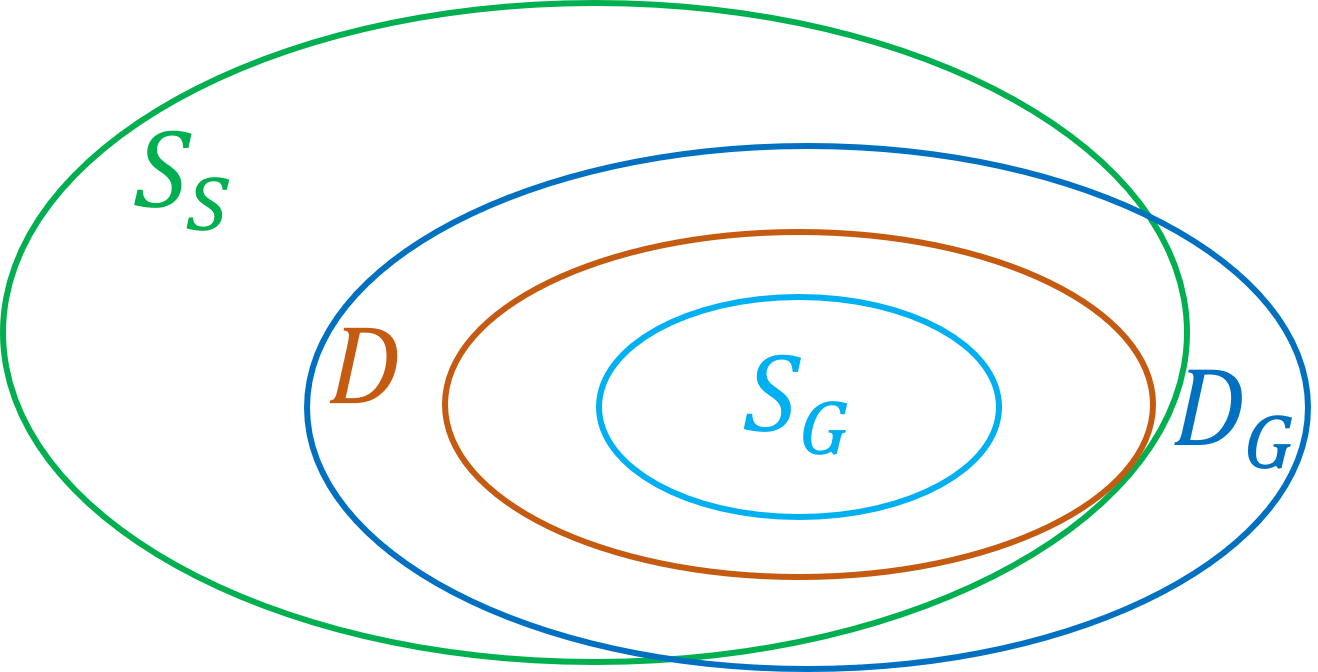}
    \caption{Illustration of the safe set $S_S$ (shown in green), the goal set $S_G$ (shown in light blue), FxT-DoA $D_G$ for the goal set (shown in dark blue) and the domain $D$ (shown in brown).}\label{fig:set DS}
\end{figure}

\begin{pf}
In both cases, following the proof of Theorem \ref{Th: d1 d2 P1 solve}, it holds that the closed-loop trajectories satisfy $x(t)\in \textnormal{int}(S_S)$ for all $0\leq t\leq T$ for any $T<\infty$. When \eqref{eq: delta1 a1 a2 cond 2} holds, using Lemma \ref{Th: FxTS new}, it follows that the closed-loop trajectories $x(t)$ of \eqref{cont aff sys} under $u(x)\coloneqq v^\star (x)$ reach the set $S_G$ within fixed time $T_1$ for all $x(0)\in \textnormal{int}(S_S)$ satisfying $T_1\leq \frac{\mu}{\alpha_1\mathrm{k}_1}\left(\frac{\pi}{2}-\tan^{-1}\mathrm{k}_2\right)$, where $\mathrm{k}_1 \coloneqq \sqrt{\frac{4\alpha_1\alpha_2-\mathfrak{d}_1}{4\alpha_1^2}}$ and $\mathrm{k}_2 \coloneqq -\frac{\mathfrak{d}_1}{\sqrt{4\alpha_1\alpha_2-\mathfrak{d}_1}}$. Also, per \eqref{eq: delta1 a1 a2 cond 2}, it holds that $\mathrm{k}_1>0$ and so, $T_1<\infty$. 

For the case when \eqref{eq: delta1 a1 a2 cond 2} does not hold, using Lemma \ref{Th: FxTS new}, it holds that the closed-loop trajectories of \eqref{cont aff sys} under $u(x)\coloneqq v^\star (x)$ reach the set $S_G$ within time $T_2$ for all $x(0)\in D_G$ where $T_2\leq  \frac{\mu k}{(1-k)\sqrt{\alpha_1\alpha_2}}$ and $D_G \coloneqq \left\{x\; | \; h_G(x)\leq k^\mu\left(\frac{\mathfrak{d}_1-\sqrt{\mathfrak{d}_1-4\alpha_1\alpha_2}}{2\alpha_1}\right)^\mu\right\}$, where $0<k<1$. Since it is also required that $x(0)\in \textnormal{int}(S_S)$, define $D$ as the largest sub-level set of the function $h_G$ in the set $D_G\cap \textnormal{int}(S_S)$, so that $D$ is forward invariant (see Figure \ref{fig:set DS}). Therefore, for all $x(0)\in D$, the closed-loop trajectories of \eqref{cont aff sys} reach the set $S_G$ within the fixed time $T_2$, while maintaining safety at all times.
\end{pf}

In brief, the solution of the {QP} \eqref{QP gen} always exists, is a continuous function of $x$, and renders the set $\textnormal{int}(S_S)$ forward invariant, i.e., guarantees safety. Furthermore, the control input is guaranteed to yield fixed-time convergence of the closed-loop trajectories to the goal set $S_G$. In the case when $\delta_1\leq 0$, the convergence is guaranteed for all $x(0)\in \textnormal{int}(S_S)$, and within the user-defined {fixed} time $T_{ud}$. If $\delta_1$ satisfies \eqref{eq: delta1 a1 a2 cond 2}, then fixed-time convergence is guaranteed for all $x(0)\in \textnormal{int}(S_S)$ (i.e., $D = \textnormal{int}(S_S)$), but the time of convergence $T_1$ may exceed the time $T_{ud}$. Finally, if \eqref{eq: delta1 a1 a2 cond 2} does not hold, then fixed-time convergence is guaranteed for all $x(0)\in D\subset \textnormal{int}(S_S)$, however, the time of convergence $T_2$ may exceed the time $T_{ud}$. 

\section{Numerical Case Studies}\label{sec: simulation}
We present two case studies to illustrate the efficacy of the proposed method. 
{We use Euler discretization to discretize the continuous-time dynamics, and the \textsc{Matlab} function $\texttt{quadprog}$ to solve the QP at each discrete time step. }

\subsection{Adaptive Cruise Control Problem}
In this example, we consider an adaptive cruise control (ACC) problem with a following and a lead vehicle, where the objective for the following vehicle is to achieve a desired speed, and maintain a safe distance from the lead vehicle. Considering that the two vehicles are modeled as point masses and travelling along a straight line, the system dynamics can be written as $\dot x = f(x) + gu$ with
$f(x) = \begin{bmatrix}
-F_r(x)/M & a_L & x_2-x_1
\end{bmatrix}^T, g=  
\begin{bmatrix}
1/M & 0 & 0
\end{bmatrix}^T$, 
where $u \in [-u_{\max}, u_{\max}]$ is the control input, $x = [x_1, x_2, x_3]^T = [v_f, v_l, D]^T \in \IR^3$ is the system state with $v_f$ being the velocity of the following vehicle, $v_l$ being the velocity of the lead vehicle, and $D$ being the distance between the two vehicles (see \cite{ames2017control} for more details). Here, $M$ is the mass of the following vehicle, $F_r(x) = f_0 + f_1 v_f + f_2 v_f^2$ is the drag force, and $a_L \in (-a_l \mathbf a_g, a_l \mathbf a_g)$ is the acceleration of the lead vehicle, with $a_l$ being the fraction of the gravitational acceleration $\mathbf a_g$.

\begin{figure}[!t]
    \centering
    \includegraphics[ width=1\columnwidth,clip]{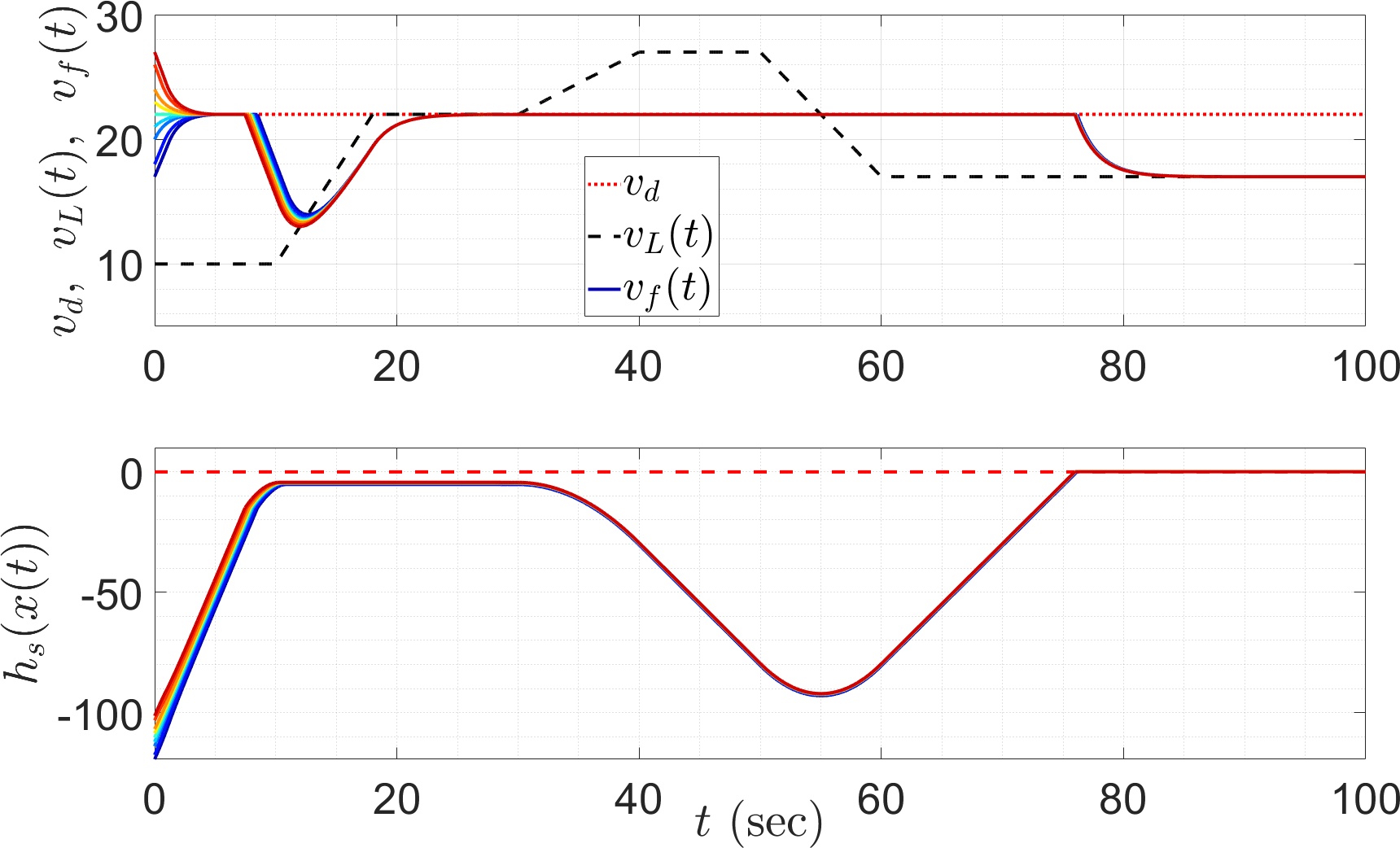}
    \caption{Tracking Performance and the safe set $h_S(x)$ for various initial follower velocities $v_f(0) \in [17,27]$ $\mathrm{m/s}$ with $T=10$ sec.}\label{ex1_1}
\end{figure}

We define the goal and the safe sets, respectively, using the functions $h_G(x) = (v_f - v_d)^2, h_S(x) = \tau_d v_f - D,$ where $v_d$ is a desired fixed velocity and $\tau_d$ is the desired time headway. We set the maximum available control effort to $u_{\max}=0.25 M\mathbf a_g$ with $\mathbf a_g=9.81$ $\mathrm{m/s^2}$ and $M=1650$ $\mathrm{Kg}$, the desired velocity to $v_d=22 $ $\mathrm{m/s}$, the initial velocity of the lead vehicle to $v_l(0)=10$ $\mathrm{m/s}$, initial distance to $D(0) = 150$ $\mathrm{m}$, $f_0=0.1$ $\mathrm{N}$, $f_1=5$ $\mathrm{Ns/m}$, $f_2=0.25$ $\mathrm{Ns^2/m^2}$, and $a_l = 0.3$. We implement the QP in \eqref{QP gen} with $T_{ud}=10$ sec, and $\mu=5$ resulting in $\gamma_1=1.2$, $\gamma_2=0.8$.

Figures \ref{ex1_1} and \ref{ex1_2} illustrate the tracking performance of the resulting controller, where the solid lines represent the velocity of the following vehicle for different initial velocity of the following vehicle $v_f(0) \in [17, 27]$ $\mathrm{m/s}$. The desired speed is achieved when the trajectories are away from the boundaries of the safe set, while closer to the boundaries of the safe set the speed of the following vehicle is reduced to maintain safety. 

\begin{figure}[t]
    \centering
   \includegraphics[ width=1\columnwidth,clip]{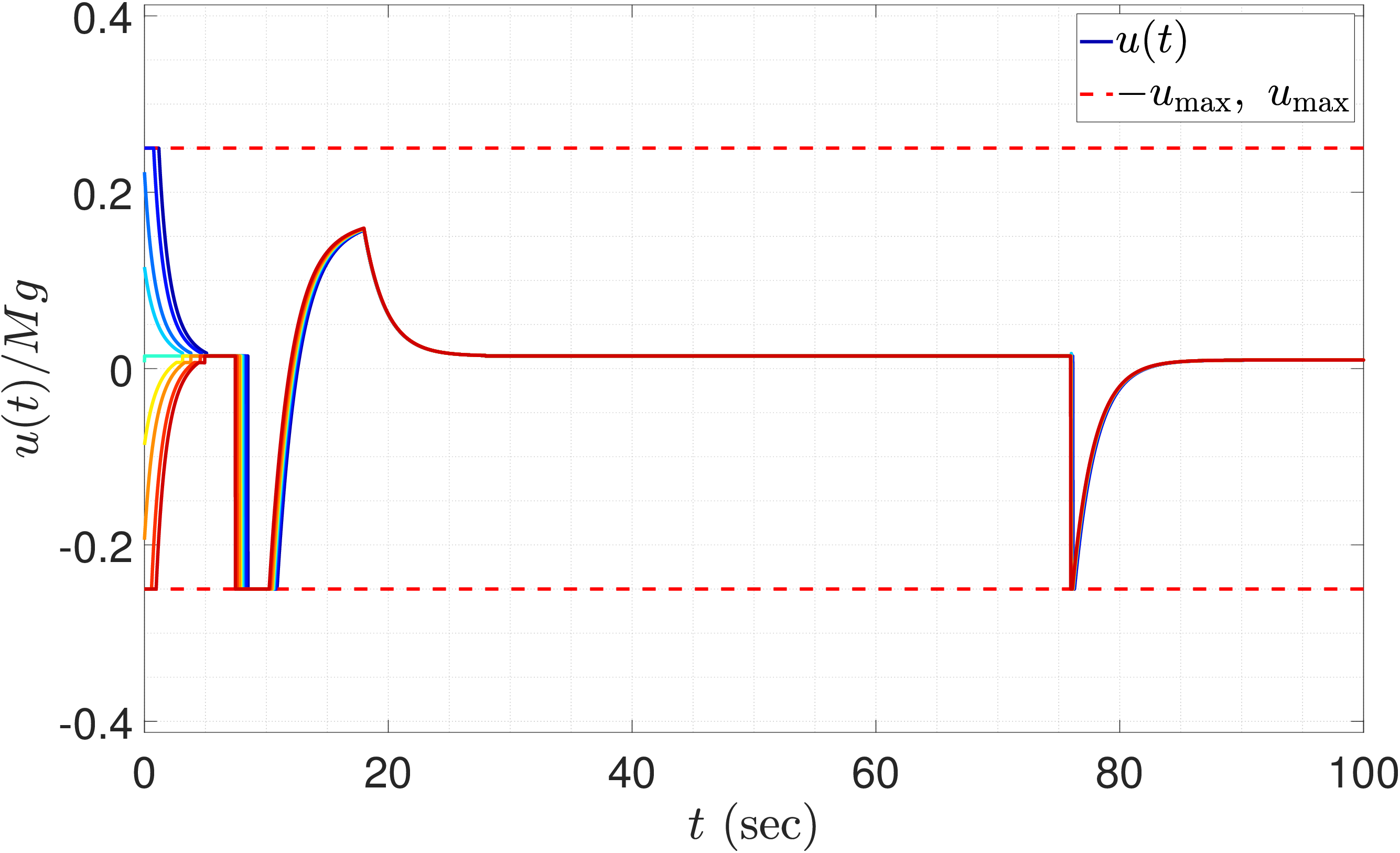}
        \caption{Control input for the closed-loop trajectories for various $v_f(0)$.}\label{ex1_2}
\end{figure}

\begin{figure}[!t]
    \centering
    \includegraphics[ width=1\columnwidth,clip]{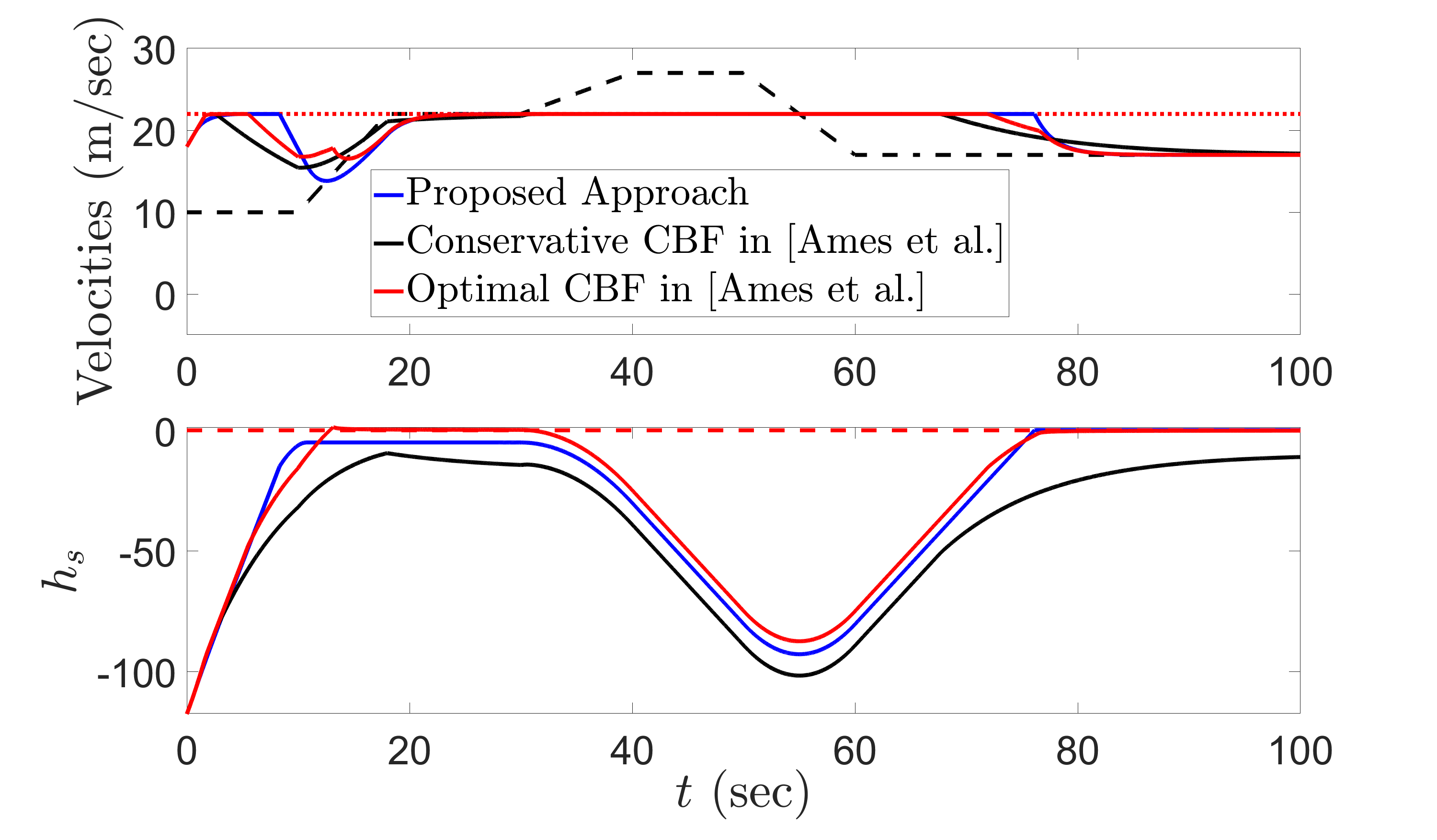}
    \caption{Tracking performance comparison of the proposed approach and the results in \cite{ames2017control} (referenced as [Ames et al.]). The reference velocity is shown in black dotted line and the lead vehicle's velocity in red dotted line. }\label{ex1_0} 
\end{figure}

\begin{figure}[t]
    \centering
    \includegraphics[ width=1\columnwidth,clip]{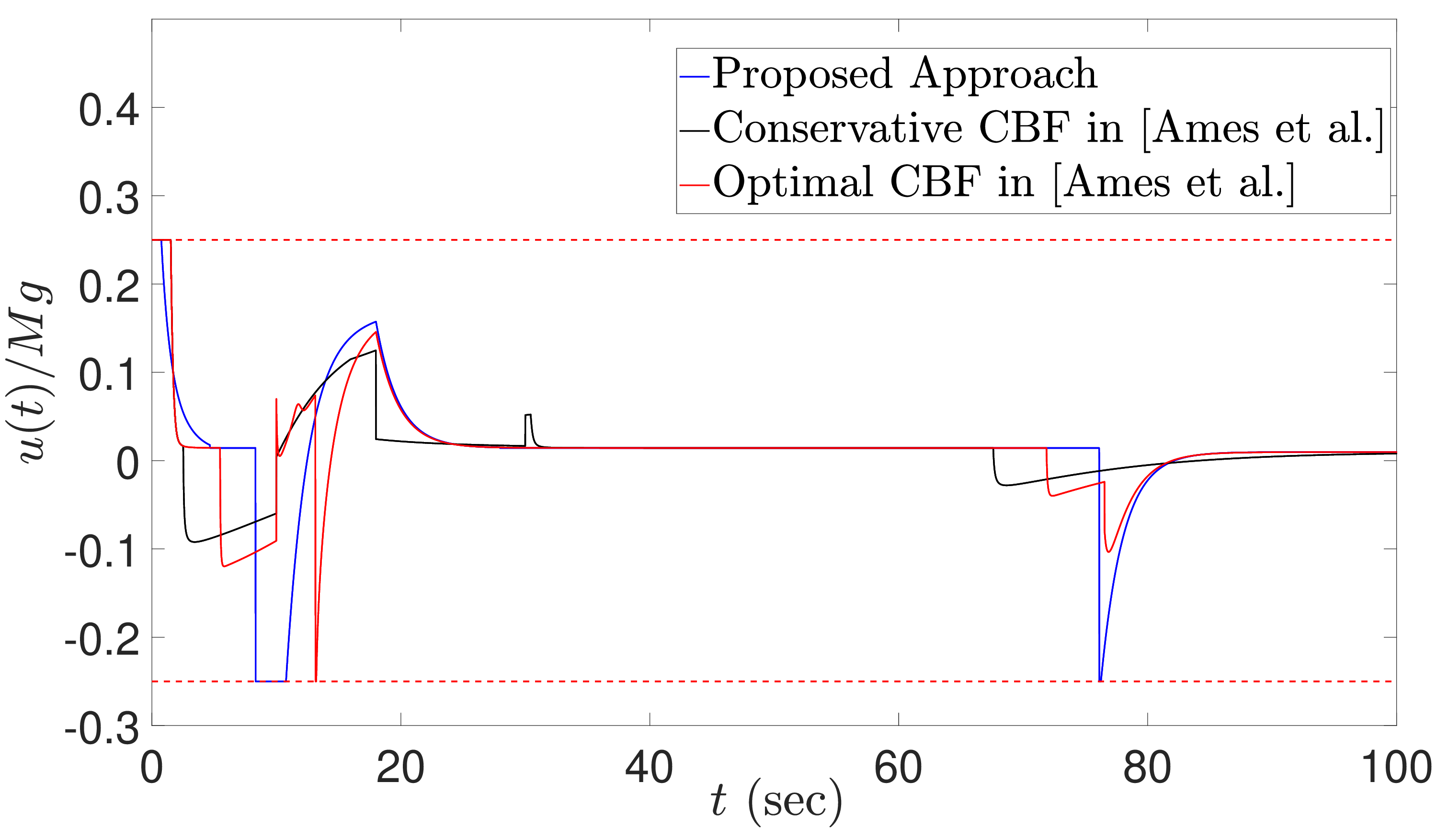}
    \caption{Control input for the proposed approach and the results in \cite{ames2017control}.}\label{ex1_00}
\end{figure}

As stated before, there is no guarantee for the existence of the solution of the  proposed QP in \cite{ames2017control} when there is a control input constraint. For the specific problem of adaptive cruise control as in this  example, the authors in \cite{ames2017control} introduced two control barrier functions, namely optimal and conservative CBFs, based on the simplified system dynamics with no drag effect $F_r(x)$ to ensure feasibility of the solution. However, due to conservatism, the newly constructed safe sets $h_F^o(x)$ and $h_F^c(x)$ for the  optimal and conservative CBFs are violated initially for large initial velocity of the following vehicle, while the actual safe set $h_S(x) = \tau_d v_f - D$ is not violated and the problem can be still feasible. 

\begin{figure}[t] 
    \centering
    \includegraphics[ width=1\columnwidth,clip]{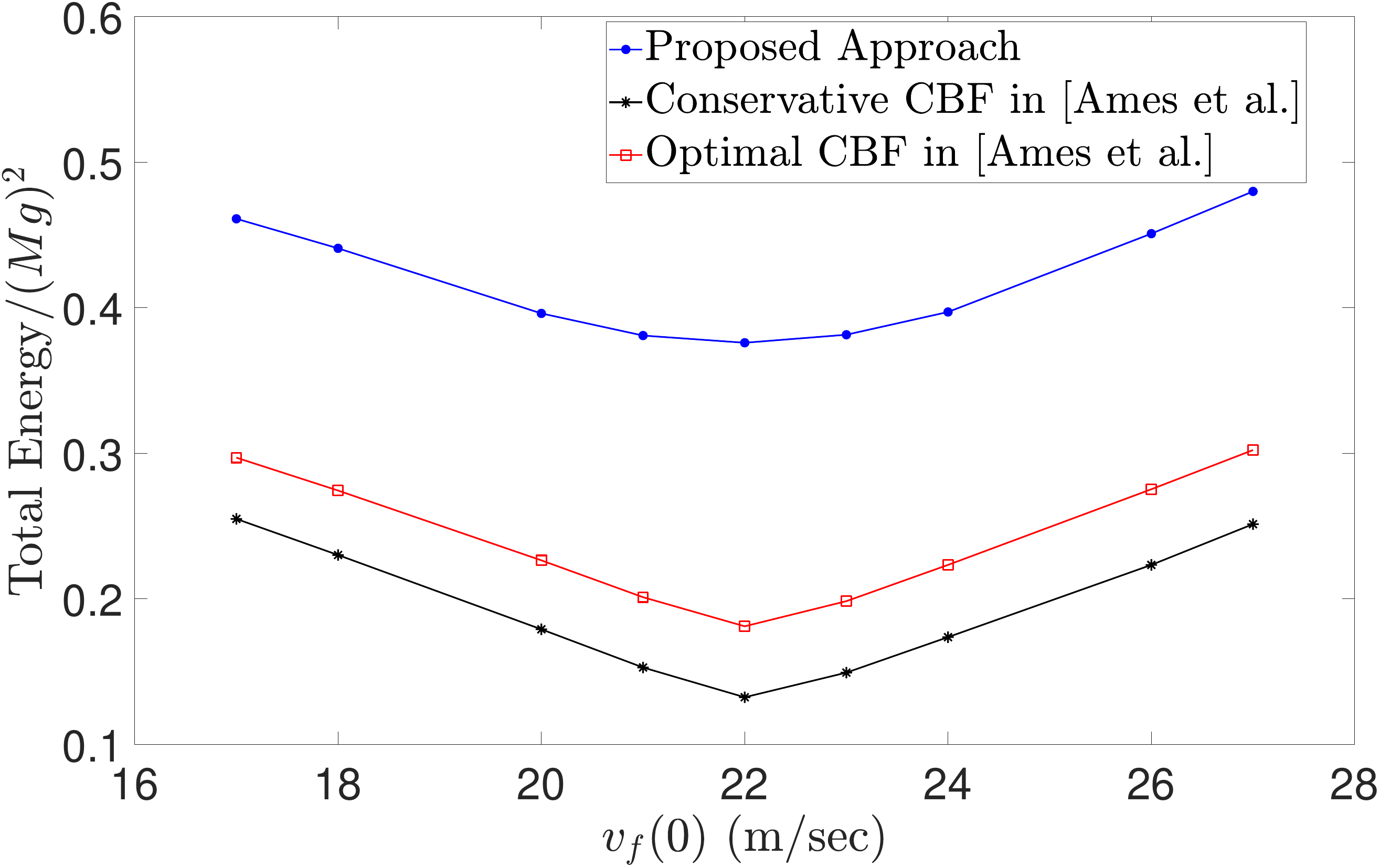}
    \caption{Energy for the proposed approach and the results in \cite{ames2017control}.}\label{ex1_3}
\end{figure}

Figures \ref{ex1_0} and \ref{ex1_00} compare  the tracking performance of the proposed approach and the results with optimal and conservative CBFs with $v_f(0) = 18$ $\mathrm{m/s}$. 
Since we are solving the QP directly and without the aforementioned conservatism, one can see from Figure \ref{ex1_0} that our proposed control approach tracks the desired goal speed of $22$ $\mathrm{m/s}$ for a longer duration before departure from this speed for maintaining safety.

Finally, Figure \ref{ex1_3} compares the control effort between the the proposed approach and the results in \cite{ames2017control}, where the proposed approach is using more available control authority. This is due to the fact that the desired goal speed is tracked for a longer duration in the proposed approach, and hence more control action is used to keep the system trajectories in the safe set as the trade-off. 

\subsection{Motion planning with spatiotemporal specifications}\label{sec sim B}
In the second scenario, we present a two-agent motion planning example under spatiotemporal specifications, where the robot dynamics are modeled under constrained unicycle dynamics as 
\begin{subequations}\label{eq: unicycle dyn}
\begin{align}
    \dot x_i & = u_i\cos(\theta_i), \\
    \dot y_i & = u_i\sin(\theta_i), \\
    \dot \theta_i & = \omega_i,
\end{align}
\end{subequations}
where $[x_i\; \ y_i]^T\in \mathbb R^2$ is the position vector of the agent $i$ for $i\in \{1, 2\}$, $\theta_i\in \mathbb R$ its orientation and $[u_i\; \ \omega_i]^T\in \mathbb R^2$ the control input vector comprising of the linear speed $u_i\in [0, u_M]$ and angular velocity $|\omega_i|\leq \omega_M$. The closed-loop trajectories for the agents, starting from $[x_1(0)\; \ y_1(0)]^T\in C_1 = \{z\in \mathbb R^2\; |\; \|z-[-1.5\; 1.5]^T\|_\infty\leq 0.5\}$ and $[x_2(0)\; \ y_2(0)]^T\in C_2 = \{z\in \mathbb R^2\; |\; \|z-[1.5\; 1.5]^T\|_\infty\leq 0.5\}$, respectively, are required to reach to sets $C_2$ and $C_1$, while staying inside the blue rectangle $\{z\in \mathbb R^2\; |\; \|z\|_\infty\leq 2\}$, and outside the red-dotted circle $\{z\in \mathbb R^2\; |\; \|z\|_2 \leq 1.5\}$, as shown in Figure \ref{scene:2}. The agents are also required to maintain a minimum inter-agent distance $d_m>0$ at all times. 

\begin{figure}[t]
    \centering
    \includegraphics[ width=1\columnwidth,clip]{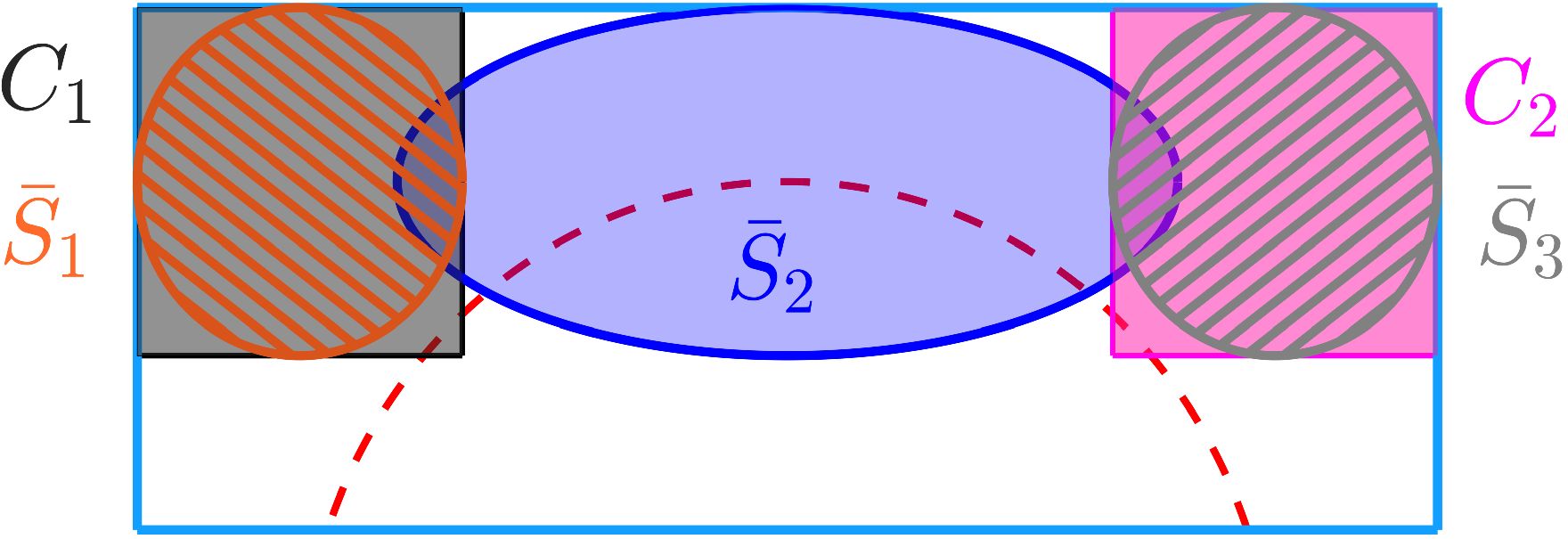}
    \caption{Problem setting and sets $\bar S_1, \bar S_2, \bar S_3$ for simulation example 2.}\label{scene:2}
\end{figure}

Note that the sets $C_i$ are not overlapping with each other, and the corresponding functions $h_i(x)$ are not continuously differentiable. 
Thus, to satisfy Assumption \ref{Assum feas} and use the QP \eqref{QP gen}, {we construct auxiliary sets $\bar S_1 = \{[x\; \ y]\; |\; \frac{(x+1.5)^2}{0.5^2}+\frac{(y-1.5)^2}{0.5^2}\leq 1\}$ (orange circle), $\bar S_2 = \{[x\; \ y]^T\; |\;\frac{x^2}{1.2^2}+\frac{(y-1.5)^2}{0.5^2}\leq 1\}$ (blue ellipse) and  $\bar S_3 = \{[x\; \ y]^T\; |\; \frac{(x-1.5)^2}{0.5^2}+\frac{(y-1.5)^2}{0.5^2}\leq 1\}$ (grey circle) as shown in Figure \ref{scene:2}}.


\begin{figure}[b]
    \centering
    \includegraphics[ width=1\columnwidth,clip]{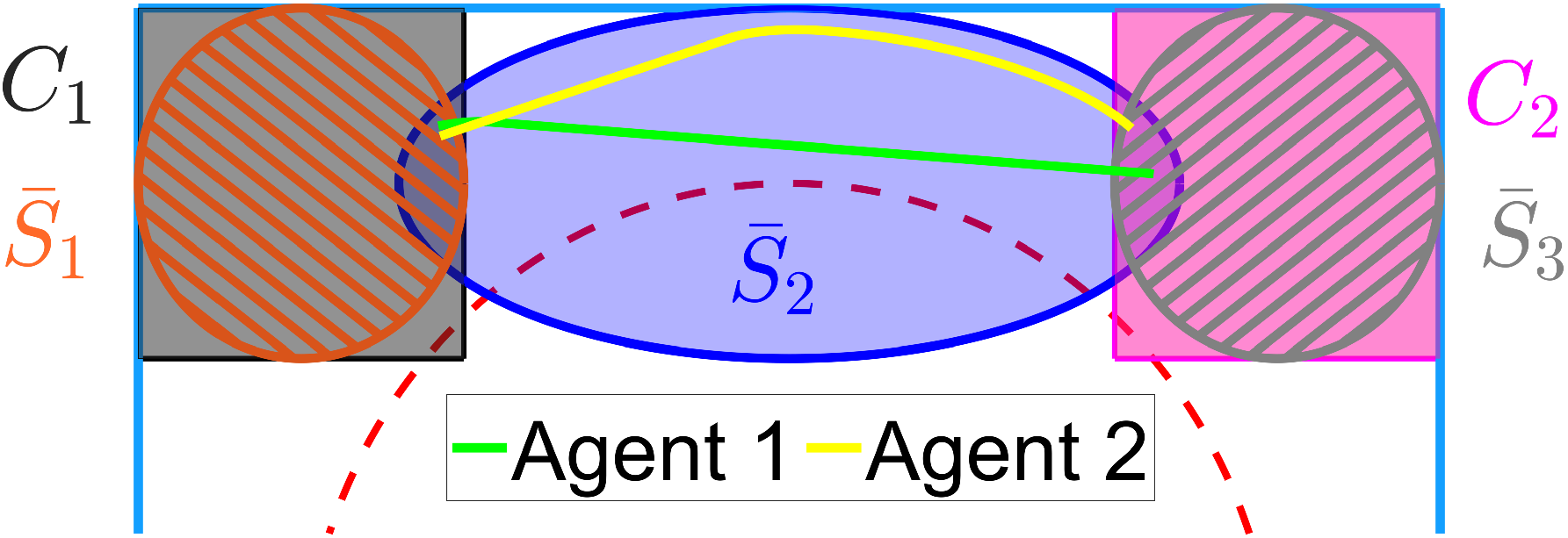}
    \caption{The resulting closed-loop paths of the agents.}\label{fig: ex3 traj}
\end{figure}

{We choose the barrier functions as
\begin{subequations}
\begin{align}
    h_{ij}(x_1,x_2,y_1,y_2) & = d_m^2-\|[x_1-x_2\; \ y_1-y_2]^T\|^2\\
    h_1(x,y) & = 1.5^2 - \|[x\; \ y]^T\|^2, \\
    h_2(x,y) & = [x\; \ y - 1.5]P[x\; \ y-1.5]^T-1, \\
    h_\theta(\theta) & = 0.1^2-(\theta-\phi-\pi)^2,
\end{align}
\end{subequations}
where $P = \begin{bmatrix}1/1.5^2 & 0 \\ 0 & 1/0.5^2\end{bmatrix}$ and $\phi \triangleq \angle([x\; \ y]^T)$ is the angle of the position vector $[x\; \ y]^T$ from the $x-$axis. The functions $h_1$ and $h_2$, along with $h_\theta$, help keep the agent inside the set $\bar S_2$ and outside the red-dotted circle, respectively, in Figure \ref{scene:2}. We choose the Lyapunov function as
\begin{subequations}
\begin{align}
    V_1 & = (x-x_g)^2 +(y - y_g)^2-0.5^2,\\
    V_\theta & = (\theta-\phi_g)^2 - 0.1^2,
\end{align}
\end{subequations}
where $[x_g\; \ y_g]^T = [1.5\; \ 1.5]^T$ is the goal location and $\phi_g = \angle([x_g\; \ y_g]^T- [x\; \ y]^T)$ is the angle between the x-axis and the vector that is defined from the agent's location to the goal point. These functions help steer the agent towards the goal location.

\begin{figure}[t]
    \centering
    \includegraphics[ width=1\columnwidth,clip]{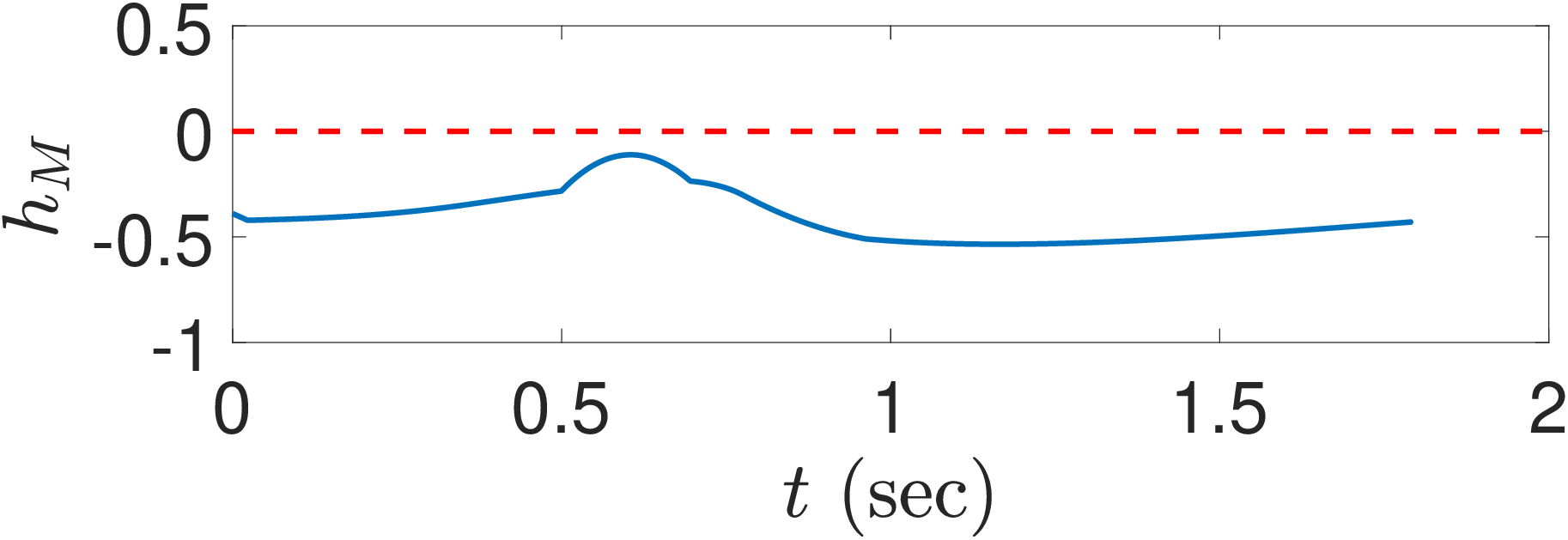}
    \caption{Pointwise maximum $h_M =\max\{h_{ij},h_1,h_2,h_\theta\}$ for the two agents.}\label{fig: max h}
\end{figure}

We choose $u_M = 2, \omega_M = 5, T =2, \mu = 5$, so that $\gamma_1 = 1.2,\gamma_2 = 0.8,\alpha_1 = \alpha_2 = \frac{5\pi}{4}$. The safety distance is chosen as $d_m = 0.1$. Figure \ref{fig: ex3 traj} plots the closed-loop trajectories of the agent and shows that the agent visit the required sets, while remaining inside the safe region and maintaining the safe distance with each other at all times. This is also evident from Figure \ref{fig: max h}, where the pointwise maximum of all the barrier functions (i.e., $h_{ij},h_1, h_2, h_\theta$) is plotted. Since $h_M<0$ at all times, it implies that the both agents satisfy the safety requirements at all times.

\begin{figure}[b]
    \centering
    \includegraphics[ width=1\columnwidth,clip]{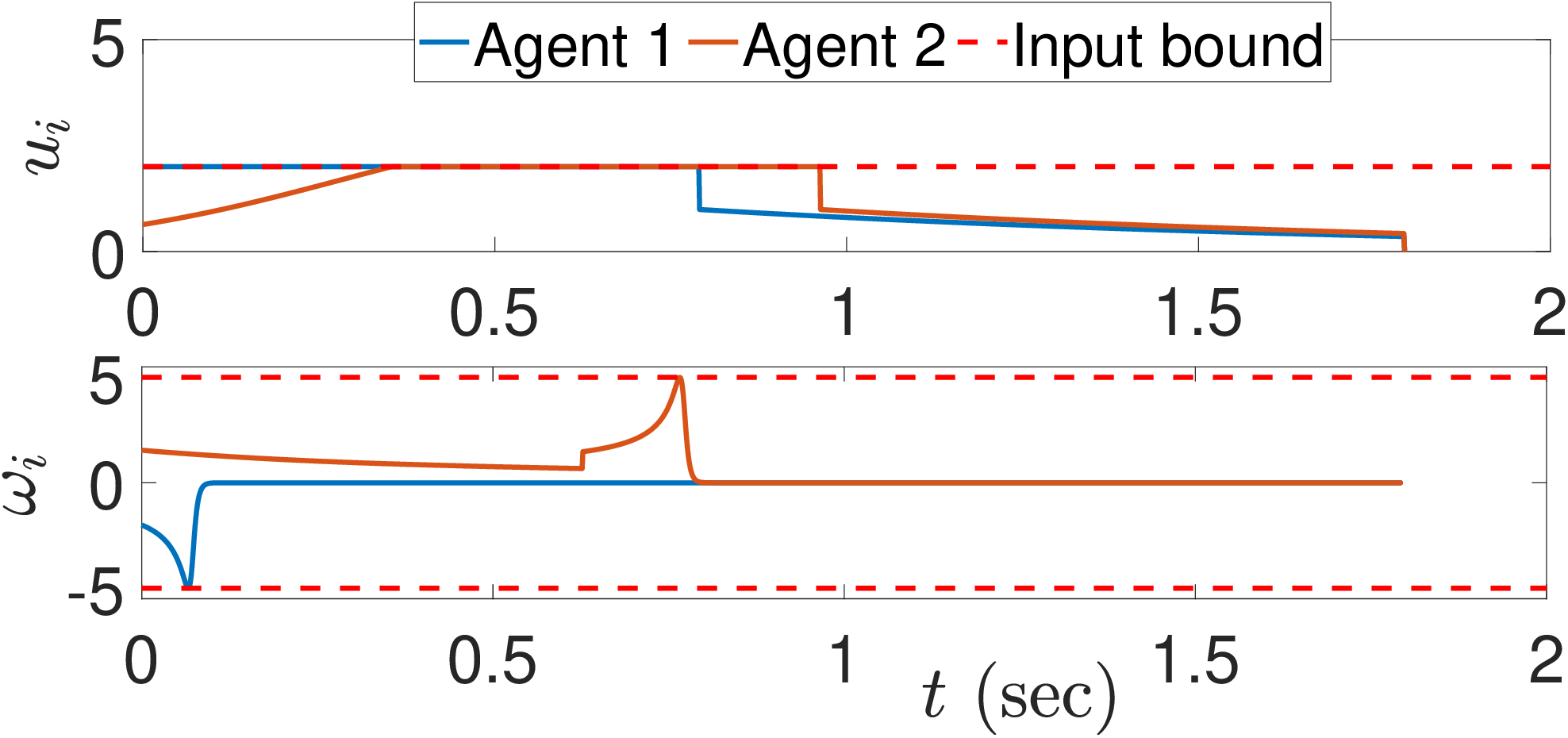}
    \caption{Inputs $u_i$ and $\omega_i$ for the two agents.}\label{fig: two agent input bounds}
\end{figure}

Figure \ref{fig: two agent input bounds} plots the individual inputs of the two agents. It is evident from the figure that the input constraints for the agents are satisfied at all times. Furthermore, note that the linear speeds $u_1, u_2$ go to zero before $t = 2$, implying that the agents reach their respective goal sets within the user-defined time $T = 2$. }

{Finally, we ran the single agent case (only Agent 1 is considered)  for various values of the input bounds on the linear speed $u_M\in [0.02 , 0.1]$ and the required time of convergence $T\in [2, 10]$ for 200 randomly chosen initial conditions $[x_1(0)\; y_1(0)]^T\notin C_2$. Figure \ref{fig: DoA u T} plots the value of $c$ such that the $c-$sublevel set of the Lyapunov function $V_1$ is a FxT-DoA for the system dynamics in \eqref{eq: unicycle dyn}. It can be seen that for a fixed value of $u_M$, the value of $c$ increases as the required time of convergence increases (see the solid lines in the figure). Furthermore, it can be observed that for a given value of the required time of convergence $T$, the value of $c$ increases as the input bound $u_M$ increases (see the dotted vertical lines in the figure). This demonstrates that the domain of attraction for fixed-time stability expands if $T$ or $u_M$ increases, as discussed in Section \ref{sec: DoA T U rel}.}

\begin{figure}[t]
    \centering
    \includegraphics[ width=1\columnwidth,clip]{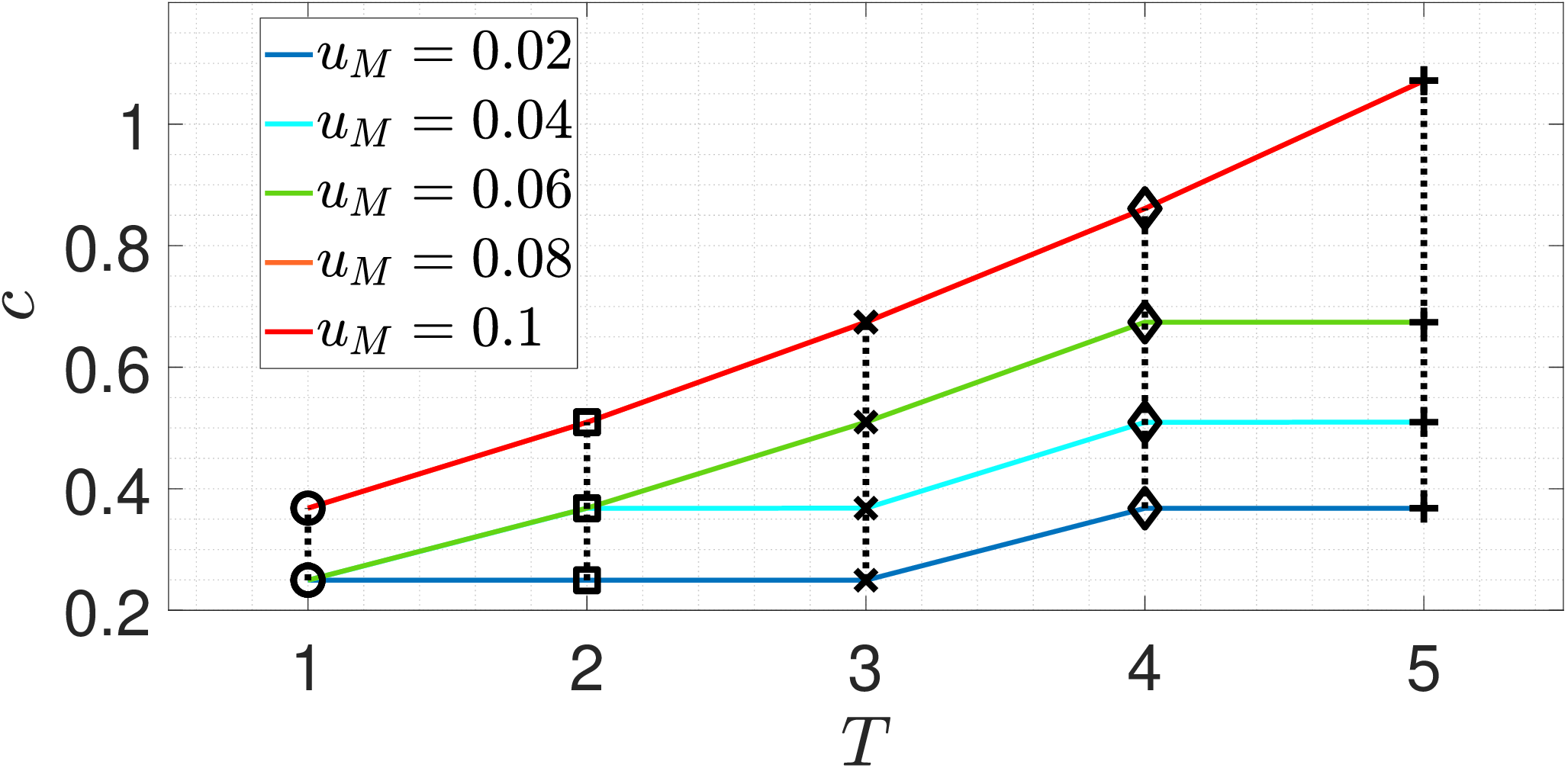}
    \caption{The value of $c$ such that $D = \{x \; | \; V_1(x)\leq c\}$ is the FxT-DoA for various values of $u_M$ and $T$.}\label{fig: DoA u T}
\end{figure}

\section{Conclusions}\label{sec: conclusion}
In this paper, 
we considered the problem of satisfying spatiotemporal constraints requiring that the closed-loop trajectories of a class of nonlinear, control-affine systems remain in a safe set at all times, and reach a goal set within a fixed time in the presence of control input constraints. We established the relation between the domain of attraction for fixed-time stability, the input bounds and the time of convergence, showing that relaxing the time constraint or increasing the input bound results into a larger FxT-DoA. Then, we proposed a novel QP formulation, proved its feasibility under the assumption of existence of a control input that renders the safe set forward invariant, and showed continuity of the solution of the proposed QP.
In the future, we would like to study the spatiotemporal control synthesis for large-scale multi-agent systems with concurrent consideration of switching in the dynamics or the system states. It will be interesting to see how the proposed method extends to systems with non-smooth dynamics, and how to formulate efficient optimization methods under such spatiotemporal constraints. 




\bibliographystyle{plain}  
\bibliography{myreferences}

\appendix
\section{Proof of Lemma \ref{lemma: delta closed form}}\label{app: KKT analysis}
Intuitively, for given control input bounds, a larger value of $T_{ud}$ (which results into smaller values of $\alpha_1, \alpha_2$), i.e., relaxation of time of convergence, should result in satisfaction of \eqref{C2 stab const 1d} with smaller value of $\delta_1$. Conversely, for a given $T_{ud}$ (and thus, for a given pair $\alpha_1, \alpha_2$), a larger control authority should result into satisfaction of \eqref{C2 stab const 1d} with smaller $\delta_1$. In order to verify the intuition , we can compute the closed-form solution of \eqref{QP 1d} for the case when the control input constraint is active, and see how the parameters $T_{ud}, u_m, u_M$ affect the optimal value of $\delta_1$. To this end, consider the Lagrangian of the {QP} in \eqref{QP 1d}:
\begin{align}
\begin{split}
    L \coloneqq &\frac{1}{2}u^2  +\frac{1}{2}\delta_1^2 +c\delta_1 +\lambda_2(u-u_M)  + \lambda_3(u_m-u)\\
    & + \lambda_1( L_fV + L_gV(x)u - \delta_1 V+\alpha_1V^{\gamma_1}  +\alpha_2V^{\gamma_2}).
\end{split}
\end{align}
Now, in order to see the effect of how input constraints affect $\delta_1$, the case when the constraint $u = u_M$ is active is studied under the assumption that $u_M>0$. Lemma \ref{lemma: qp 1d feas} guarantees feasibility of the {QP} in \eqref{QP 1d} for all $x\notin S_G$. Thus, the Slater's condition holds and the {KKT} conditions are both necessary and sufficient for optimality (see e.g., \cite[Chapter 5]{boyd2004convex}). Using the {KKT} conditions, it follows that the optimal solution $(u^\star, \delta_1^\star, \lambda_1^\star, \lambda_2^\star,\lambda_3^\star)$ satisfies
\begin{align*}
    \delta_1^\star(x) & = -c+\lambda_1^\star(x) V(x), \\
    u^\star(x) & = -\lambda_2^\star(x) + \lambda_3^\star(x)-\lambda_1^\star(x) L_gV(x), \\
    \lambda_1^\star(x) & \geq 0, \quad \lambda_2^\star(x)\geq 0, \quad \lambda_3^\star(x) \geq 0,
\end{align*}
for any $x\notin S_G$. We are now ready to present the proof of Lemma \ref{lemma: delta closed form}. 

\begin{pf}
For $u^\star(x) = u_M$, it is required that $\lambda_2^\star(x)>0$. Since $u^\star(x) = u_M$ and $u_m<u_M$, it follows that $u^\star(x)>u_m$ (i.e., the lower-bound constraint is inactive) and so $ \lambda_3^\star(x) = 0$. It follows that $\lambda_2^\star(x) = -u_M-\lambda_1^\star(x) L_gV(x)$. 

Since $u^\star(x) = u_M$, the constraint \eqref{C2 stab const 1d} must be active. Otherwise, we have $\lambda_1^\star(x) = 0$, which implies that $\lambda_2^\star(x) = -u_M<0$, which violates the optimality condition $\lambda_2^\star(x)\geq 0$. Thus, for $\lambda_1^\star(x)>0$ when $u^\star(x) = u_M$, it is essential that the constraint \eqref{C2 stab const 1d} is active, and it follows that the optimal value of $\delta_1$ is given as:{\small\vspace{-15pt}
\begin{align*}
    \delta_1^\star (x)  = & \frac{L_fV(x) + L_gV(x)u_M+\alpha_1V(x)^{\gamma_1}  +\alpha_2V(x)^{\gamma_2}}{V(x)}\nonumber\\
     = & \frac{L_fV(x)+L_gV(x)u_M}{V(x)} +\alpha_1V(x)^{\gamma_1-1} + \alpha_2V(x)^{\gamma_2-1}.
\end{align*}}\normalsize
Using this, and the definition of function $a(x)$, it follows that $$\lambda_1^\star(x) = \frac{a(x)}{V(x)^2}, \quad \lambda_2^\star(x) = -u_m-L_gV(x)\frac{a(x)}{V(x)^2}.$$ 
Now, for $x\in S_M$, it holds that $\lambda_1^\star(x)> 0$ and $\lambda_2^\star(x)>0$ and thus, the optimal value of $\delta_1$ is given by \eqref{eq: delta closed 1d} and that the optimal value of $u$ being $u^\star = u_M$ holds when $x\in S_M$.\footnote{If the set $S_M = \emptyset$, it implies that there does not exist $x$ such that $u^\star(x) = u_M$, or in other words, the control input never saturates with the upper bound.} 
\end{pf}

\section{Proof of Theorem \ref{Th: QP sol cont}}\label{app: proof Thm QP sol cont}

\begin{pf}
The proof is based on \cite[Theorem 2.1]{robinson1974perturbed}. Denote by $I(x)$, the indices of rows of matrix $A(x)$ corresponding to the active constraints, i.e., $j\in I(x)$ implies $A_{j}(x)z^\star (x) = b_{j}(x)$, where $A_j\in \mathbb R^{1\times m+2}$ is the $j$-th row of the matrix $A$ and $b_j\in \mathbb R$ the $j$-th element of $b$. Define matrix $A_{ac}$ and $b_{ac}$ by collecting $A_j(x)$, and of $b_j$, respectively, so that $A_{ac}(x)z^\star (x) = b_{ac}(x)$. Since at most one of the input constraints $u_i\leq u_{M_i}$ or $u_{m_i}\leq u_i$ can be active at any given time, the matrix $A_{ac}(x)$ has $k$ rows from $\begin{bmatrix}A_u & \mathbf 0_{2m} & \mathbf 0_{2m}\end{bmatrix}$, where $k\leq m$, which are linearly independent. Furthermore, it has $p$ rows from $\begin{bmatrix}L_gh_G & -h_G & 0\\  L_gh_S & 0 & h_S\end{bmatrix}$, where $p\leq 2$. Since $h_G, h_S\neq 0$ for $x\in \textnormal{int}(S_S)\setminus S_G$, these $k+p$ rows are linearly independent. Thus, the matrix $A_{ac}$ is full row-rank, i.e., the gradients of the active constraints $\{A_{ac_i}(x)\}$, where $A_{ac_i}(x)$ is the $i-$th row of matrix $A_{ac}(x)$, are linearly independent. 

The second derivative $\nabla_{zz}L$ of the Lagrangian defined as $$L(z,x,\lambda) \coloneqq \frac{1}{2}z^THz+F^Tz + \lambda^T(A(x)z-b(x)),$$ with respect to $z$ is $H$, which is a positive definite matrix. Using this, and the fact that the {QP} \eqref{QP gen} is feasible, it holds that the second-order sufficient conditions for optimality hold (see e.g. \cite[Section 2.3]{robinson1974perturbed}). Note that \cite[Theorem 2.1]{robinson1974perturbed} requires that the objective function and the functions $G_i(x,u)$ have the second derivatives jointly continuous in $(x,u)$. Since the objective function $\frac{1}{2}z^THz+F^Tz$ is independent of $x$, and the constraint functions $G_i(x,u)$ are linear in $u$, the second derivative of these functions are independent of $x$, and thus, satisfy this condition trivially. Finally, the strict complementary slackness condition is satisfied per Assumption \ref{assum hs hg scs}. Thus, all the conditions of \cite[Theorem 2.1]{robinson1974perturbed} are satisfied. Therefore, for every $x\in \textnormal{int}(S_S)\setminus S_G$, there exists an open neighborhood $\mathcal X\subset \textnormal{int}(S_S)\setminus S_G$ of $x$ such that the solution $z^\star (x)$ is continuous for all $x\in \mathcal X$. Since this holds for all $x\in \textnormal{int}(S_S)\setminus S_G$, it follows that the solution $z^\star (x)$ is continuous for all $x\in \textnormal{int}(S_S)\setminus S_G$.
\end{pf}

\end{document}